\documentclass[reqno,11pt]{amsart}

\usepackage{amssymb}
\usepackage{amsgen}
\usepackage{amsmath}
\usepackage{amsthm}
\usepackage{mathrsfs}
\usepackage{cite}
\usepackage{amsfonts}
\usepackage{tikz-cd}
\usepackage{tikz}
\usetikzlibrary{shapes,decorations.pathreplacing}
\usetikzlibrary{calc}
\usetikzlibrary{backgrounds}
\usetikzlibrary{automata}
\usetikzlibrary{positioning}
\usetikzlibrary{decorations.markings}
\usetikzlibrary{arrows}
\usetikzlibrary{scopes}
\usetikzlibrary{intersections}
\usepackage{enumerate}

\hyphenation{he-red-i-tar-y}
\hyphenation{semi-simple}
\hyphenation{mon-oid mon-oids}

\newcommand{\simlift}[1]{#1^{\sharp}}

\newcommand{\sgn}{\mathop{\mathrm{sgn}}}
\newcommand{\gldim}{\mathop{\mathrm{gl.~dim}}}

\renewcommand{\to}{\longrightarrow}

\newcommand{\rad}[1]{\mathop{\mathrm{rad}}\nolimits(#1)}
\newcommand{\modu}[1]{#1\text{-}\mathrm{mod}}

\newcommand{\Res}{\mathop{\mathrm{Res}}\nolimits}
\newcommand{\Ind}{\mathop{\mathrm{Ind}}\nolimits}
\newcommand{\Coind}{\mathop{\mathrm{Coind}}\nolimits}

\newcommand{\JJ}{\mathrel{\mathscr J}} 
\newcommand{\RR}{\mathrel{\mathscr R}} 
\newcommand{\LL}{\mathrel{\mathscr L}} 

\newcommand{\til}[1]{\ensuremath{\widetilde {#1}}}

\newcommand{\Irr}{\mathop{\mathrm{Irr}}\nolimits}

\newcommand{\soc}[1]{\mathop{\mathrm{soc}}\nolimits(#1)}
\newcommand{\Ime}{T}

\newcommand{\Anne}{N}
\newcommand{\Aug}[1]{\mathop{\mathrm{Aug}}\nolimits(#1)}

\newcommand{\End}{\mathop{\mathrm{End}}\nolimits}

\newcommand{\Hom}{\mathop{\mathrm{Hom}}\nolimits}
\newcommand{\Ext}{\mathop{\mathrm{Ext}}\nolimits}

\newcommand{\chaptermarkb}[1]%
   {\markboth{{#1}}{}}
\newcommand{\sectionmarkb}[1]%
   {\markright{{#1}}{}}

   \newtheorem{theorem}{Theorem}[section]
\newtheorem{proposition}[theorem]{Proposition}

\newtheorem{lemma}[theorem]{Lemma}
{\theoremstyle{definition}
}
{\theoremstyle{remark}
}
\newtheorem{corollary}[theorem]{Corollary}
{\theoremstyle{remark}
}
{\theoremstyle{remark}
}

\newtheorem{prop}[theorem]{Proposition}

\newtheorem{cor}[theorem]{Corollary}
{\theoremstyle{remark}
}
{\theoremstyle{remark}
}
{\theoremstyle{remark}
}

\numberwithin{equation}{section}

\title[The global dimension of the full transformation monoid]{The global dimension of the full transformation monoid with an appendix by V.~Mazorchuk and B.~Steinberg}

\author{Benjamin Steinberg}
\address{%
    BS:
    Department of Mathematics\\
    City College of New York\\
    Convent Avenue at 138th Street\\
    New York, New York 10031\\
    USA}
\email{bsteinberg@ccny.cuny.edu}
\address{%
VM:
Department of Mathematics, Uppsala University, Box 480,
SE-75106,\\ Uppsala, SWEDEN}
\email{mazor\symbol{64}math.uu.se}

\thanks{This work was partially supported by a grant from the Simons Foundation(\#245268
to Benjamin Steinberg) and the Binational Science Foundation of Israel and the US (\#2012080 to Benjamin Steinberg), by a PSC-CUNY grant and by a CUNY Collaborative Incentive Research Grant. The research in the appendix was conducted by VM and BS during the Representation Theory program at the Institut Mittag-Leffler in May~2015.}
\date{April 16, 2015, Revised January 19, 2016}

\keywords{full transformation monoid, global dimension, monoid representation theory, quivers}
\subjclass[2010]{20M30, 16E10, 20M25, 16G99}

\begin{document}

\begin{abstract}
The representation theory of the symmetric group has been intensively studied for over 100 years and is one of the gems of modern mathematics.  The full transformation monoid $\mathfrak T_n$ (the monoid of all self-maps of an $n$-element set) is the monoid analogue of the symmetric group.  The investigation of its representation theory was begun by Hewitt and Zuckerman in 1957.  Its character table was computed by Putcha in 1996 and its representation type was determined in a series of papers by Ponizovski{\u\i}, Putcha and Ringel between 1987 and 2000.  From their work, one can deduce that the global dimension of $\mathbb C\mathfrak T_n$ is $n-1$ for $n=1,2,3,4$.  We prove in this paper that the global dimension is $n-1$ for all $n\geq 1$ and, moreover, we provide an explicit minimal projective resolution of the trivial module of length $n-1$.

In an appendix with V.~Mazorchuk we compute the indecomposable tilting modules of $\mathbb C\mathfrak T_n$ with respect to Putcha's quasi-hereditary structure and the Ringel dual (up to Morita equivalence).
\end{abstract}

\maketitle

\section{Introduction}
The character theory of the symmetric group (cf.~\cite{jameskerber,FultonHarris,MacDonaldIG,Sagan,okversh}) is an elegant piece of mathematics, featuring a beautiful blend of algebra and combinatorics, with applications to such diverse areas as probability~\cite{diaconisnotes,diaconisspectralanalysis} and mathematical physics.

The analogue in monoid theory of the symmetric group is the full transformation monoid $\mathfrak T_n$.  This is the monoid of all self-maps of an $n$-element set.  In 1957, Hewitt and Zuckerman initiated the study of the representation theory of $\mathfrak T_n$,  showing that the simple $\mathbb C\mathfrak T_n$-modules are parameterized by partitions of $r$ where $1\leq r\leq n$~\cite{HewZuck}.  However, very few of their results were specific to $\mathfrak T_n$, as witnessed by the fact their main theorem is a special case of a result for arbitrary finite monoids obtained at approximately the same time by Munn and Ponizovski{\u\i}~\cite{Poni,Munn1}.

In a \textit{tour de force} work~\cite{Putcharep5}, Putcha computed the character table of $\mathfrak T_n$ and gave an explicit description of all the simple $\mathbb C\mathfrak T_n$-modules except for one family.  It was the discovery of an explicit description of this second family via exterior powers that led to this paper.  Putcha, in fact, knows this explicit description (private communication), but only became aware of it after~\cite{Putcharep5,Putcharep3} were written.

Recall that a finite dimensional algebra $A$ has finite representation type\index{finite representation type} if there are only finitely many isomorphism classes of finite dimensional indecomposable $A$-modules.   Ponizovski{\u\i} proved that $\mathbb C\mathfrak T_n$ has finite representation type for $n\leq 3$ and conjectured that this was true for all $n$~\cite{ponireptype}.  Putcha disproved Ponizovski{\u\i}'s conjecture by showing that $\mathbb C\mathfrak T_n$ does not have finite representation type for $n\geq 5$~\cite{Putcharep3}. He did this by computing enough of the quiver of $\mathbb C\mathfrak T_n$ to see that $\mathbb C\mathfrak T_n/\mathrm{rad}^2(\mathbb C\mathfrak T_n)$ already does not have finite representation type.  Putcha also computed the quiver of $\mathbb C\mathfrak T_4$ and observed that  $\mathbb C\mathfrak T_4/\mathrm{rad}^2(\mathbb C\mathfrak T_4)$  does have finite representation type.  Ringel~\cite{ringel} computed a quiver presentation for $\mathbb C\mathfrak T_4$ and proved that it is of finite representation type.
It is an open question to compute the quiver of $\mathbb C\mathfrak T_n$ in full generality.

Ringel also proved that $\mathbb C\mathfrak T_4$ has global dimension $3$ by exhibiting the minimal projective resolutions of the simple modules. It is easy to check that the global dimension of $\mathbb C\mathfrak T_n$ is also $n-1$ for $n=1,2,3$ using the results of~\cite{Putcharep3}.  The main result of this paper is that the global dimension of $\mathbb C\mathfrak T_n$ is $n-1$ for all $n\geq 1$. As a byproduct of the proof, we also establish that the quiver of $\mathbb C\mathfrak T_n$ is acyclic.  In fact, these results hold \textit{mutatis mutandis} over any ground field of characteristic $0$ because $\mathbb Q$ is a splitting field for all symmetric groups and hence for all full transformation monoids.  In an appendix with Volodymyr Mazorchuk we describe the indecomposable tilting modules of $\mathbb C\mathfrak T_n$ with respect to Putcha's quasi-hereditary structure~\cite{Putcharep3}.  They turn out to be precisely the injective indecomposable modules, except the trivial module, and the simple projective one-dimensional module that restricts to the sign representation on permutations and is annihilated by all non-permutations.  We also compute the Ringel dual of $\mathbb CT_n$.

We prove the main theorem using homological techniques for working with the algebra of a von Neumann regular monoid developed by the author and Margolis in~\cite{rrbg}.  One could alternatively use the theory of quasi-hereditary algebras~\cite{quasihered,dlabringelqh}.  The key point is that outside of a single family the standard modules with respect to the natural quasi-hereditary structure found by Putcha~\cite{Putcharep3} are simple and standard modules have good homological properties.

The paper begins with a review of monoid representation theory and the character theory of the full transformation monoid.  The following section consists of two parts: the first part proves that the exterior powers of the natural $\mathbb C\mathfrak T_n$-module $\mathbb C^n$ are projective indecomposable modules with simple tops and simple radicals that are exterior powers of the augmentation submodule; the second part establishes a vanishing result for higher $\mathrm{Ext}$-functors between simple modules and proves the main result.  The appendix, with V.~Mazorchuk, uses the results of the previous sections to compute the indecomposable tilting modules and Ringel dual of $\mathbb C\mathfrak T_n$  with respect to its natural quasi-hereditary structure~\cite{Putcharep3}.

The reader is referred to~\cite{CP,Arbib,Eilenberg,higginsbook,qtheor} for  basic semigroup theory.  We use~\cite{AuslanderReiten,benson,assem} as our primary references for the theory of finite dimensional algebras. In this paper, all modules are assumed to be left modules and all monoids are assumed to act on the left of sets unless otherwise indicated. For a detailed discussion of the representation theory of finite monoids and its applications, the reader is referred to the author's forthcoming book~\cite{SteinbergBook}.

\section{The representation theory of finite monoids}
This section reviews the necessary background results for the paper.  Details can be found in~\cite{gmsrep,SteinbergBook}.

\subsection{Finite monoids}
Fix a finite monoid $M$. An \emph{ideal} $I$ of $M$ is a non-empty subset $I$ such that $MIM\subseteq I$.  Left and right ideals are defined analogously.   If $m\in M$, then $Mm$, $mM$ and $MmM$ are the \emph{principal} left, right and two-sided ideals generated by $m$, respectively. We put
\[I(m) = \{n\in M\mid m\notin MnM\}.\]
If $I(m)\neq \emptyset$, then it is an ideal.

The following equivalence relations are three of \emph{Green's relations}~\cite{Green}.  Set, for $m_1,m_2\in M$,
\begin{enumerate}[(i)]
\item $m_1\JJ m_2$ if and only if $Mm_1M=Mm_2M$;
\item $m_1\LL m_2$ if and only if $Mm_1=Mm_2$;
\item $m_1\RR m_2$ if and only if $m_1M=m_2M$.
\end{enumerate}
The $\mathscr J$-class of an element $m$ is denoted by $J_m$, and similarly the $\LL$-class and $\RR$-class of $m$ are denoted $L_m$ and $R_m$, respectively.

Let us describe Green's relations on $\mathfrak T_n$; this is a standard exercise in semigroup theory (cf.~\cite{CP,GM}).  The \emph{rank} of a mapping $f\in \mathfrak T_n$ is the cardinality of its image.  Let $f,g\in \mathfrak T_n$. Then $f\JJ g$ if and only if they have the same rank, $f\RR g$ if and only if they have the same image and $f\LL g$ if and only if they induce the same partition of the domain into fibers.

The set of idempotents of a monoid $M$ is denoted $E(M)$.  If $e\in E(M)$, then $eMe$ is a monoid with identity $e$.  Let $G_e$ be the group of units of $eMe$.  It is called the \emph{maximal subgroup} of $M$ at $e$. If $e,f\in E(M)$ with $MeM=MfM$, then $eMe\cong fMf$ and $G_e\cong G_f$ (cf.~\cite{TilsonXI}).

The group of units of $\mathfrak T_n$ is the symmetric group $\mathfrak S_n$.  If $e\in \mathfrak T_n$ has rank $r$, then it is well known and easy to show that $e\mathfrak T_ne\cong \mathfrak T_r$ and hence $G_e\cong \mathfrak S_r$.

A standard fact in the theory of finite monoids is the following~\cite{TilsonXI}.

\begin{proposition}\label{p:sing.elem}
Let $e\in E(M)$. Then $J_e\cap eMe=G_e$, $J_e\cap Me=L_e$ and $J_e\cap eM=R_e$.
\end{proposition}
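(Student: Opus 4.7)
The plan is to reduce everything to the \emph{stability} property of finite monoids: for all $a,b\in M$, one has $ab\JJ a$ iff $ab\RR a$, and $ab\JJ b$ iff $ab\LL b$. This is a standard consequence of finiteness (the descending chain of principal right ideals $aM\supseteq abM\supseteq ab^2M\supseteq\cdots$ must stabilize, which forces the equivalence), and I would simply invoke it, together with the classical fact that the $\HH$-class of an idempotent is a group equal to the local unit group.

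I would handle the $\LL$ identity first. For $L_e\subseteq J_e\cap Me$ there is nothing to do, since $m\LL e$ gives $Mm=Me\ni m$ and $MmM=MeM$. For the reverse inclusion, take $m\in J_e\cap Me$ and write $m=xe$ for some $x\in M$ (using $1\in M$). Then $xe=m\JJ e$, so by stability applied to the product $xe$ we obtain $xe\LL e$, that is, $m\LL e$. The identity $J_e\cap eM=R_e$ follows by the left–right dual argument.

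For $J_e\cap eMe=G_e$, the inclusion $G_e\subseteq J_e\cap eMe$ is clear: an element $m\in G_e$ lies in $eMe$ by definition, and if $m'\in eMe$ satisfies $mm'=e=m'm$ then $e\in MmM$ while $m\in MeM$, giving $m\JJ e$. Conversely, if $m\in J_e\cap eMe$, then $m\in J_e\cap Me$ and $m\in J_e\cap eM$, so by the two identities already established $m\LL e$ and $m\RR e$. Hence $m\in H_e$, and since the $\HH$-class of the idempotent $e$ coincides with the maximal subgroup $G_e$ (a standard consequence of Green's lemma), we conclude $m\in G_e$.

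There is no real obstacle here; the only point that deserves care is making sure the invocation of stability is applied in the correct direction and to the correct factor of the product, and the appeal to $H_e=G_e$ can be made explicit by the usual cancellation argument (if $m\HH e$ then $m=em=me\in eMe$, and choosing $u,v\in M$ with $um=e=mv$ one checks that $eue$ and $eve$ are two-sided inverses of $m$ in $eMe$).
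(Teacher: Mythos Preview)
Your argument is correct. The reduction to stability of finite monoids (that $ab\JJ a\Leftrightarrow ab\RR a$ and $ab\JJ b\Leftrightarrow ab\LL b$) handles the $\LL$- and $\RR$-identities cleanly, and combining them to land in $H_e=G_e$ is the standard way to finish. The only small cosmetic point: in the inclusion $L_e\subseteq J_e\cap Me$ you might want to say explicitly that $m\in Mm=Me$ uses the identity of $M$, but you essentially do.

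Note, however, that the paper does not actually prove this proposition: it is stated as ``a standard fact in the theory of finite monoids'' with a citation to Tilson, and no argument is given. So there is no paper proof to compare against. Your write-up is exactly the kind of short self-contained argument one would expect for this fact, and it relies on precisely the ingredients (stability plus $H_e=G_e$) that the cited reference develops.
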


The maximal subgroup $G_e$ at an idempotent $e$ acts freely on the right of $L_e$ and on the left of $R_e$, respectively, by multiplication and two elements of one of these sets are in the same $G_e$-orbit if and only if they are $\mathscr R$-equivalent, respectively, $\mathscr L$-equivalent; see~\cite{CP} or~\cite[Appendix A]{qtheor}.

A monoid $M$ is (von Neuman) \emph{regular} if, for all $m\in M$, there exists $m'\in M$ with $mm'm=m$.  For example, it is well known that the full transformation monoid $\mathfrak T_n$ is regular~\cite{CP,GM}, as is the monoid $M_n(\mathbb F)$ of all $n\times n$ matrices over a field $\mathbb F$~\cite{CP}.  A finite monoid is regular if and only if each $\mathscr J$-class contains an idempotent, cf.~\cite{Arbib} or~\cite[Appendix A]{qtheor}.

\subsection{Simple modules}
Let $\Bbbk$ be a field.  We continue to hold fixed a finite monoid $M$.   If $X\subseteq M$ we let $\Bbbk X$ denote the $\Bbbk$-linear span of $X$ in the monoid algebra $\Bbbk M$. If $I\subseteq M$ is a left (respectively, right or two-sided) ideal of $M$, then $\Bbbk I$ is a left (respectively, right or two-sided) ideal of $\Bbbk M$.

Let $S$ be a simple $\Bbbk M$-module.  We say that an idempotent $e\in E(M)$ is an \emph{apex}\index{apex} for $S$ if $eS\neq 0$ and $\Bbbk I(e)S=0$.  One has that $mS= 0$ if and only if $m\in I(e)$.  It follows that if $e,f$ are apexes of $S$, then $MeM=MfM$.

Fix an idempotent $e\in E(M)$ and put $A_e=\Bbbk M/\Bbbk I(e)$. Observe that $eA_ee\cong \Bbbk [eMe]/\Bbbk[eI(e)e]\cong \Bbbk G_e$ by Proposition~\ref{p:sing.elem}.  A simple $\Bbbk M$-module $S$ with apex $e$ is the same thing as a simple $A_e$-module $S$ with $eS\neq 0$. One can then apply the theory~\cite[Chapter 6]{Greenpoly} to classify these modules.  This was done in~\cite{gmsrep}.  See~\cite[Chapters~4,5]{SteinbergBook} for details.

Notice that as $\Bbbk$-vector spaces we have that $A_ee\cong \Bbbk L_e$ and $eA_e\cong \Bbbk R_e$~\cite{gmsrep}.  The corresponding left $\Bbbk M$-module structure on $\Bbbk L_e$ is defined by \[m\odot \ell =\begin{cases} m\ell, & \text{if}\ m\ell\in L_e\\ 0, & \text{else}\end{cases}\] for $m\in M$ and $\ell\in L_e$.  From now on we will omit the symbol ``$\odot$.''  The right $\Bbbk M$-module structure on $\Bbbk R_e$ is defined dually. In the semigroup theory literature, $\Bbbk L_e$ and $\Bbbk R_e$ are known as left and right \emph{Sch\"utzenberger representations}. Note that $\Bbbk L_e$ is a free right $\Bbbk G_e$-module and $\Bbbk R_e$ is a free left $\Bbbk G_e$-module because $G_e$ acts freely on the right of $L_e$ and on the left of $R_e$.  In fact, $\Bbbk L_e$ is a $\Bbbk M$-$\Bbbk G_e$-bimodule and dually $\Bbbk R_e$ is a $\Bbbk G_e$-$\Bbbk M$-bimodule.  Thus we can define functors
\begin{align*}
\Ind_{G_e}\colon& \modu{\Bbbk G_e}\to \modu{\Bbbk M}\\
\Coind_{G_e}\colon& \modu{\Bbbk G_e}\to \modu{\Bbbk M}\\
\Res_{G_e}\colon& \modu{\Bbbk M}\to \modu{\Bbbk G_e}\\
\Ime_e\colon& \modu{\Bbbk M}\to \modu{\Bbbk M}\\
\Anne_e\colon& \modu{\Bbbk M}\to \modu{\Bbbk M}
\end{align*}
by putting
\begin{align*}
\Ind_{G_e}(V) &= A_ee\otimes_{eA_ee}V = \Bbbk L_e\otimes_{\Bbbk G_e} V\\
\Coind_{G_e}(V) &= \Hom_{eA_ee}(eA_e,V)= \Hom_{\Bbbk G_e}(\Bbbk R_e,V)\\
\Res_{G_e}(V) &= eV =\Hom_{A_e}(A_ee,V)=eA_e\otimes_{A_e} V\\
\Ime_e(V) &= \Bbbk MeV\\
\Anne_e(V) & =\{v\in V\mid e\Bbbk Mv=0\}\}.
\end{align*}

One has that $\Res_{G_e}(\Ind_{G_e}(V))\cong V\cong \Res_{G_e}(\Coind_{G_e}(V))$ for any $\Bbbk G_e$-module, cf.~\cite{gmsrep}.  Note that the functors $\Ind_{G_e}$ and $\Coind_{G_e}$ are exact (cf.~\cite{gmsrep} or~\cite[Chapters~4,5]{SteinbergBook}) because $\Bbbk L_e$ and $\Bbbk R_e$ are free $\Bbbk G_e$-modules (on the appropriate sides). They also preserve indecomposability (see~\cite[Chapters~4,5]{SteinbergBook}).

If $V$ is a module over a finite dimensional algebra $A$, then $\rad{V}$ denotes the radical of $V$ and $\soc{V}$ the socle of $V$ (see~\cite{assem} for the definitions).

We now state the fundamental theorem of Clifford-Munn-Ponizovski{\u\i} theory~\cite[Chapter~5]{CP}, as formulated in~\cite{gmsrep}; see also~\cite[Chapter~5]{SteinbergBook}.

\begin{theorem}\label{t:clifford.munn.poni}
Let $M$ be a finite monoid and $\Bbbk$ a field.
\begin{enumerate}[(i)]
\item There is a bijection between isomorphism classes of simple $\Bbbk M$-modules with apex $e\in E(M)$ and isomorphism classes of  simple $\Bbbk G_e$-modules given on a simple $\Bbbk M$-module $S$ with apex $e$ and a simple $\Bbbk G_e$-module $V$ by
\begin{align*}
S&\longmapsto \Res_{G_e}(S)=eS\\
V&\longmapsto \simlift{V}=\Ind_{G_e}(V)/\Anne_e(\Ind_{G_e}(V))=\Ind_{G_e}(V)/\rad{\Ind_{G_e}(V)}\\ &\hspace{1.05cm}\cong \soc{\Coind_{G_e}(V)}=\Ime_e(\Coind_{G_e}(V)).
\end{align*}
\item Every simple $\Bbbk M$-module has an apex (unique up to $\mathscr J$-equivalence).
\item If $V$ is a simple $\Bbbk G_e$-module, then every composition factor of $\Ind_{G_e}(V)$ and $\Coind_{G_e}(V)$ has apex $f$ with $MeM\subseteq MfM$.  Moreover, $\simlift{V}$ is the unique composition factor of either of these two $\Bbbk M$-modules with apex $e$ and it appears in both these modules as a composition factor with multiplicity one.
\end{enumerate}
\end{theorem}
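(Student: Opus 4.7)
The plan is to reduce the classification of simple $\Bbbk M$-modules with a prescribed apex $e$ to the classical Green correspondence for the idempotent-corner algebra $eA_ee \cong \Bbbk G_e$ (Proposition~\ref{p:sing.elem}), while using the $\mathscr J$-order filtration of $\Bbbk M$ to locate apexes. For (ii), I let $S$ be simple and set $I = \{m \in M : mS = 0\}$, a two-sided ideal whenever nonempty, through whose Rees factor $S$ factors. Since $M/I$ is finite, I pick $m \notin I$ with $MmM$ minimal; the minimum $\mathscr J$-class of $M/I$ is completely simple and contains an idempotent $e$ with $e \JJ m$ in $M$. Writing $m = aeb$, one sees that $eS = 0$ would force $mS = 0$, so $eS \neq 0$. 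If $n \in I(e)$ but $n \notin I$, minimality of $J_e$ in $M/I$ forces $e \in MnM$, contradicting $n \in I(e)$; hence $I(e) \subseteq I$, $\Bbbk I(e) S = 0$, and $e$ is an apex. Two apexes must avoid each other's $I(\cdot)$, giving $MeM = MfM$.

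For (i), I fix $e$ and work in $A_e = \Bbbk M / \Bbbk I(e)$; the simple $\Bbbk M$-modules with apex $e$ are precisely the simple $A_e$-modules $S$ with $eS \neq 0$. The Schur-functor theory (cf.~\cite[Chapter 6]{Greenpoly}) gives a bijection $S \mapsto eS$ between these and the simple $eA_ee \cong \Bbbk G_e$-modules. For the inverse, given a simple $V$, the subspace $e \otimes V \subseteq \Ind_{G_e}(V) = \Bbbk L_e \otimes_{\Bbbk G_e} V$ is a $\Bbbk G_e$-submodule identified with $V$ via $\Res_{G_e}$, and it generates $\Ind_{G_e}(V)$ as an $A_e$-module. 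Hence any submodule $N$ with $eN = V$ contains $e \otimes V$ and thus equals $\Ind_{G_e}(V)$; every proper $N$ has $eN \subsetneq V$, so simplicity of $V$ gives $eN = 0$, i.e., $N \subseteq \Anne_e(\Ind_{G_e}(V))$. Therefore $\rad{\Ind_{G_e}(V)} = \Anne_e(\Ind_{G_e}(V))$ and the unique simple top is $\simlift{V}$. The dual argument with $\Coind_{G_e}(V)$ identifies the simple socle $\soc{\Coind_{G_e}(V)} = \Ime_e(\Coind_{G_e}(V)) \cong \simlift{V}$.

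For (iii), every composition factor $T$ of $\Ind_{G_e}(V)$ or $\Coind_{G_e}(V)$ inherits $\Bbbk I(e) T = 0$, so its apex $f$ satisfies $f \notin I(e)$, i.e., $MeM \subseteq MfM$. If this containment is strict then $f \notin MeM$ (else $MfM \subseteq MeM$), so $e \in I(f)$ and $eT = 0$. Applying the exact Schur functor $T \mapsto eT$ to a composition series of $\Ind_{G_e}(V)$ yields a composition series of $V$ after discarding zero quotients, with the apex-$e$ factors contributing their $eT$; since $V$ is simple, exactly one composition factor has apex $e$, necessarily $\simlift{V}$, occurring with multiplicity one. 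The main obstacle throughout is the clean transfer of Green's idempotent-corner machinery to the monoid setting, which rests on the Sch\"utzenberger observation that $\Bbbk L_e$ is a free right $\Bbbk G_e$-module, making $A_ee$ the requisite progenerator for the Serre subcategory of $A_e$-modules on which $e$ acts faithfully.
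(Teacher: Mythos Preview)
The paper does not supply a proof of Theorem~\ref{t:clifford.munn.poni}; it is quoted as background with references to~\cite{gmsrep} and~\cite[Chapter~5]{SteinbergBook}, so there is no in-paper argument to compare against. Your outline is the standard Green--Schur-functor proof found in those sources, and parts (i) and (iii) are carried out correctly.

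There is one genuine (though easily repaired) gap in your argument for (ii). You choose $m\notin I$ with $MmM$ minimal and then speak of ``the minimum $\mathscr J$-class of $M/I$,'' subsequently asserting that $n\notin I$ forces $e\in MnM$. But minimality of $MmM$ among principal ideals meeting $M\setminus I$ does not by itself imply that $MmM\subseteq MnM$ for every $n\notin I$; in an arbitrary finite monoid there can be several incomparable minimal $\mathscr J$-classes above an ideal. What makes the minimum unique here is the simplicity of $S$, and you need to invoke it explicitly: if $n\notin I$ then $\Bbbk MnS=S$, hence $e\Bbbk MnS=eS\neq 0$, so some $ean$ lies outside $I$; since $M(ean)M\subseteq MeM$ and $MeM$ is minimal outside $I$, we get $M(ean)M=MeM$ and therefore $e\in M(ean)M\subseteq MnM$, contradicting $n\in I(e)$. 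With this line inserted, your proof of (ii) goes through. (A small terminological point: when $I\neq\emptyset$ the Rees quotient $M/I$ has zero, so the relevant $\mathscr J$-class is $0$-minimal and the corresponding principal factor is completely $0$-simple rather than completely simple; the conclusion that $J_m$ contains an idempotent is unaffected.)
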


If we denote the set of isomorphism classes of simple $\Bbbk M$-modules by $\Irr_{\Bbbk}(M)$, then there is the following parametrization of the irreducible representations of $M$.

\begin{corollary}\label{c:param}
Let $e_1,\ldots, e_s$ be a complete set of idempotent representatives of the  $\mathscr J$-classes of $M$ containing idempotents.  Then there is a bijection between $\Irr_{\Bbbk}(M)$ and the disjoint union $\bigcup_{i=1}^s\Irr_{\Bbbk}(G_{e_i})$.
\end{corollary}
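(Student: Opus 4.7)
The plan is to assemble the bijection directly from Theorem~\ref{t:clifford.munn.poni}. By part~(ii) of that theorem, every simple $\Bbbk M$-module $S$ has an apex $e \in E(M)$, and any two apexes of $S$ are $\mathscr J$-equivalent. Since $e_1,\dots,e_s$ is a complete set of $\mathscr J$-class representatives of the idempotent-containing $\mathscr J$-classes, there is therefore a unique index $i = i(S)$ such that $S$ admits some apex $\mathscr J$-equivalent to $e_i$. Moreover, because apexes of $S$ form a single $\mathscr J$-class, $S$ does not have an apex $\mathscr J$-equivalent to any $e_j$ with $j \neq i$. This gives a well-defined partition
\[
\Irr_{\Bbbk}(M) = \bigsqcup_{i=1}^{s} \Irr_{\Bbbk}(M)_{e_i},
\]
where $\Irr_{\Bbbk}(M)_{e_i}$ denotes the class of simples admitting $e_i$ itself as an apex.

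Next, I would apply Theorem~\ref{t:clifford.munn.poni}(i) to each $e_i$ separately. For a fixed $e_i$, part~(i) provides mutually inverse assignments $S \mapsto eS$ and $V \mapsto \simlift{V}$ between $\Irr_{\Bbbk}(M)_{e_i}$ and $\Irr_{\Bbbk}(G_{e_i})$; well-definedness and bijectivity are exactly the content of part~(i). Taking the disjoint union of these bijections over $i=1,\dots,s$ then yields the desired bijection
\[
\Irr_{\Bbbk}(M) \;\longleftrightarrow\; \bigsqcup_{i=1}^s \Irr_{\Bbbk}(G_{e_i}).
\]

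The only subtlety—and the closest thing to an obstacle—is the choice of representative within a $\mathscr J$-class. Different choices of $e_i$ in the same $\mathscr J$-class produce isomorphic (but not canonically identified) maximal subgroups $G_{e_i}$, so the bijection depends on the chosen transversal $e_1,\dots,e_s$; however, the statement of the corollary already fixes such a choice, so this is really a bookkeeping issue and not a genuine obstruction. The rest is a direct repackaging of the theorem.
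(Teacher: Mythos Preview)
Your argument is correct and is exactly the intended one: the paper does not even supply a separate proof of this corollary, treating it as an immediate repackaging of Theorem~\ref{t:clifford.munn.poni}. Your explicit verification that each simple admits the chosen representative $e_i$ itself as an apex (not merely one $\mathscr J$-equivalent to it) is a nice touch that the paper leaves implicit.
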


If $M$ is regular and $\Bbbk$ has characteristic $0$, then the modules $\Ind_{G_e}(V)$ and $\Coind_{G_e}(V)$ with $e\in E(M)$ and $V\in \Irr_{\Bbbk}(G_e)$ form the standard and costandard modules, respectively, for the natural structure quasi-hereditary algebra on $\Bbbk M$ found by Putcha~\cite{Putcharep3}. See~\cite{quasihered,dlabringelqh} for more on quasi-hereditary algebras.

\subsection{Homological aspects}
Let $A$ be a finite dimensional $\Bbbk$-algebra. The \emph{projective dimension} $\mathop{\mathrm{pd}} V$ of an $A$-module $V$ is the minimum length (possibly infinite) of a projective resolution of $V$. Each finite dimensional $A$-module $V$ has a unique minimal projective resolution (minimal in both length and in a certain categorical sense); see~\cite{AuslanderReiten,benson,assem}. Formally, a projective resolution $P_{\bullet}\to V$ is \emph{minimal} if each boundary map $d_n\colon P_n\to d_n(P_n)$ is a projective cover.
The following well-known proposition is stated in the context of group algebras in~\cite[Proposition~3.2.3]{cohomologyringbook}, but the proof there is valid for any finite dimensional algebra.

\begin{prop}\label{minresolution}
Let $A$ be a finite dimensional $\Bbbk$-algebra, $M$ a finite dimensional $A$-module and let $P_\bullet\to M$ be a projective resolution.  Then the following are equivalent.
\begin{enumerate}
\item $P_\bullet\to M$ is the minimal projective resolution of $M$.
\item $\Hom_A(P_q,S)\cong\Ext_A^q(M,S)$ for any $q\geq 0$ and simple $A$-module $S$.
\end{enumerate}
\end{prop}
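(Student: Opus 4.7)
The plan is to show that both conditions are equivalent to a single intermediate statement: for every simple $A$-module $S$, every coboundary map $d_n^\ast \colon \Hom_A(P_{n-1}, S) \to \Hom_A(P_n, S)$ given by precomposition with $d_n$ vanishes for all $n \geq 1$. Once this intermediate condition is isolated, the equivalences with (1) and (2) drop out essentially by unwinding definitions.

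First I would recall two standard facts about a finitely generated projective $P$ over a finite dimensional $\Bbbk$-algebra: a surjection $p \colon P \to V$ is a projective cover if and only if $\ker p \subseteq \rad{P}$; and $\rad{P} = \rad{A}\,P$ coincides with the intersection of the kernels of all homomorphisms from $P$ into simple $A$-modules, so in particular every such homomorphism annihilates $\rad{P}$. Applying the projective cover criterion to each boundary $d_n \colon P_n \to d_n(P_n)$ (and to the augmentation $\epsilon \colon P_0 \to M$) shows that $P_\bullet \to M$ is minimal if and only if $d_n(P_n) \subseteq \rad{P_{n-1}}$ for every $n \geq 1$ (with the convention $d_0 = \epsilon$).

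Next I would verify the intermediate equivalence. If $d_n(P_n) \subseteq \rad{P_{n-1}}$, then for any simple $S$ and any $f \colon P_{n-1} \to S$, the map $f$ vanishes on $\rad{P_{n-1}}$, so $f \circ d_n = 0$ and hence $d_n^\ast = 0$. Conversely, if $d_n^\ast = 0$ for every simple $S$, then $d_n(P_n)$ lies in the kernel of every homomorphism $P_{n-1} \to S$ with $S$ simple, and the characterization of $\rad{P_{n-1}}$ as precisely that intersection forces $d_n(P_n) \subseteq \rad{P_{n-1}}$.

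Finally, for the equivalence with (2), I would use that $\Ext_A^q(M, S) = \ker(d_{q+1}^\ast)/\mathrm{im}(d_q^\ast)$ is always a subquotient of $\Hom_A(P_q, S)$. If every $d_n^\ast$ vanishes for $n \geq 1$, this subquotient equals $\Hom_A(P_q, S)$. Conversely, an isomorphism of the finite dimensional vector spaces $\Ext_A^q(M, S) \cong \Hom_A(P_q, S)$ forces their dimensions to agree, which in turn forces $\mathrm{im}(d_q^\ast) = 0$ and $\ker(d_{q+1}^\ast) = \Hom_A(P_q, S)$; in other words, all the relevant coboundaries vanish. There is no serious obstacle; the entire argument is a bookkeeping reduction of minimality to the vanishing of coboundaries into simples. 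The only subtle point is that in (2) one is given only an abstract isomorphism, not a natural one, but this still suffices because $\Ext$ is a subquotient of $\Hom_A(P_q, S)$ and finite dimensionality makes a dimension count conclusive.
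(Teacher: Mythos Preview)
Your argument is correct. The paper does not supply its own proof of this proposition; it simply cites \cite[Proposition~3.2.3]{cohomologyringbook} and remarks that the argument there (stated for group algebras) carries over to any finite dimensional algebra. What you have written is precisely the standard proof one finds in such references: unwind minimality to the condition that each boundary image lies in the radical of the next projective, translate that via the description of the radical as the joint kernel of all maps to simples into the vanishing of every coboundary map into a simple, and then observe that this vanishing is equivalent to condition (2) by a dimension count on the subquotient computing $\Ext$.
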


The \emph{global dimension} of $A$ is
\[\gldim A=\sup \{\mathop{\mathrm{pd}} V\mid V\ \text{is an $A$-module}\}.\]

It is convenient to reformulate the definition in terms of the $\Ext$-functors.  The $\Ext$-functors measure the failure of the $\Hom$-functors to be exact.  Details about their properties can be found in~\cite{benson,assem} or any text on homological algebra.
A classical fact is that
\[\gldim A=\sup\{n\in \mathbb N\mid \Ext^n_A(S,S')\neq 0,\ \text{for some simple modules}\ S,S'\}\]
where the supremum could be infinite~\cite{AuslanderReiten,benson,assem}.
This reformulation relies on the following well-known lemma, which is proved by induction on the number of composition factors using the long exact sequence associated to the $\Ext$-functors.

\begin{lemma}\label{l:comp.fact}
Let $V,W$ be finite dimensional $A$-modules.  Then one has that $\Ext^n_A(V,W)=0$ if either of the following two conditions hold.
\begin{enumerate}[(i)]
\item $\Ext^n_A(V,S)=0$ for each composition factor $S$ of $W$.
\item $\Ext^n_A(S',W)=0$ for each composition factor $S'$ of $V$.
\end{enumerate}
\end{lemma}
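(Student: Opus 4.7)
The plan is to prove both items by induction on the composition length $\ell(W)$ (for (i)) and $\ell(V)$ (for (ii)), using the long exact sequences in $\Ext$ associated to a short exact sequence in one of the variables.

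For (i), the base case $\ell(W)=1$ is immediate since $W$ is then itself simple, hence a composition factor of itself, so $\Ext^n_A(V,W)=0$ by assumption. For the inductive step, pick any simple submodule $S \hookrightarrow W$ (for instance, any summand of the socle), giving a short exact sequence
\[
0 \longrightarrow S \longrightarrow W \longrightarrow W/S \longrightarrow 0.
\]
Applying the covariant functor $\Hom_A(V,-)$ yields a long exact sequence whose relevant piece is
\[
\Ext^n_A(V,S) \longrightarrow \Ext^n_A(V,W) \longrightarrow \Ext^n_A(V,W/S).
\]
The composition factors of $W/S$ together with $S$ exhaust the composition factors of $W$, so in particular every composition factor of $W/S$ is a composition factor of $W$ and the inductive hypothesis applies to $W/S$, giving $\Ext^n_A(V,W/S)=0$. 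The hypothesis of (i) gives $\Ext^n_A(V,S)=0$. Exactness then forces $\Ext^n_A(V,W)=0$, completing the induction.

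For (ii) the argument is dual. Induct on $\ell(V)$; the base case is again trivial. For the inductive step choose a simple quotient $V \twoheadrightarrow S'$ with kernel $V'$, giving
\[
0 \longrightarrow V' \longrightarrow V \longrightarrow S' \longrightarrow 0,
\]
and apply the contravariant functor $\Hom_A(-,W)$ to obtain the long exact sequence piece
\[
\Ext^n_A(S',W) \longrightarrow \Ext^n_A(V,W) \longrightarrow \Ext^n_A(V',W).
\]
The composition factors of $V'$ are among those of $V$, so the inductive hypothesis gives $\Ext^n_A(V',W)=0$, while $\Ext^n_A(S',W)=0$ by assumption. Exactness again yields $\Ext^n_A(V,W)=0$.

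There is no real obstacle here: the only mild subtlety is remembering that the induction step is legitimate because in both cases the composition factors of the smaller module (either $W/S$ or $V'$) are a subset of the composition factors of the original module, so the hypothesis of the lemma is inherited by the smaller module without any additional verification.
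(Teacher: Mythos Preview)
Your proof is correct and follows exactly the approach the paper indicates: induction on the number of composition factors using the long exact sequence for $\Ext$. The paper does not spell out the details, but your argument is precisely the standard one it has in mind.
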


 The following result is~\cite[Lemma~3.3]{rrbg}.

\begin{lemma}\label{l:ideal.reg.mon}
Let $M$ be a finite regular monoid and $\Bbbk$ a field.  Let $I$ be an ideal of $M$.  Then the isomorphism \[\Ext^n_{\Bbbk M}(V,W)\cong \Ext^n_{\Bbbk M/\Bbbk I}(V,W)\] holds for any $\Bbbk M/\Bbbk I$-modules $V,W$ and all $n\geq 0$.
\end{lemma}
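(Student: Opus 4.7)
The plan is to prove the lemma by showing that every projective $\Bbbk M/\Bbbk I$-module is projective as a $\Bbbk M$-module. Once this is in hand, any projective resolution $P_\bullet\to V$ in $\modu{\Bbbk M/\Bbbk I}$ is automatically a projective resolution in $\modu{\Bbbk M}$, and since $\Hom_{\Bbbk M}(P_i,W)=\Hom_{\Bbbk M/\Bbbk I}(P_i,W)$ whenever both arguments are $\Bbbk M/\Bbbk I$-modules, taking cohomology of the two $\Hom$-complexes yields the desired isomorphism.

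Two preliminary observations drive the argument. First, $\Bbbk I$ is an idempotent ideal: given $m\in I$, regularity produces $m'\in M$ with $m=mm'm$ and $mm'\in I$, so $m\in (\Bbbk I)^2$, whence $(\Bbbk I)^n=\Bbbk I$ for all $n\geq 1$. Second, a finite-dimensional $\Bbbk M$-module $V$ is annihilated by $\Bbbk I$ if and only if every composition factor of $V$ is: the forward direction is trivial, and conversely, if every composition factor is killed then $\Bbbk I\cdot V\subseteq \rad{V}$, iteration yields $(\Bbbk I)^n V\subseteq \mathrm{rad}^n(V)=0$ for large $n$, and idempotency collapses this to $\Bbbk I\cdot V=0$.

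The key step is to show that the projective cover $P(S)$ in $\modu{\Bbbk M}$ of any simple $\Bbbk M/\Bbbk I$-module $S$ is itself a $\Bbbk M/\Bbbk I$-module. Write $S=\simlift{V}$ for a simple $\Bbbk G_e$-module $V$ via Theorem~\ref{t:clifford.munn.poni}; the hypothesis $\Bbbk I\cdot S=0$ is equivalent to $I\subseteq I(e)$. Using exactness and indecomposability-preservation of $\Ind_{G_e}$ together with the fact that $\simlift{V}$ is the simple top of $\Ind_{G_e}(V)$, one identifies $P(S)\cong \Ind_{G_e}(P_{G_e}(V))$, where $P_{G_e}(V)$ is the projective cover of $V$ in $\modu{\Bbbk G_e}$. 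Applying Theorem~\ref{t:clifford.munn.poni}(iii) to each simple composition factor of $P_{G_e}(V)$, every composition factor of $P(S)$ has apex $e'$ with $MeM\subseteq Me'M$, so $I\subseteq I(e)\subseteq I(e')$. Hence every composition factor of $P(S)$ is annihilated by $\Bbbk I$, and the second preliminary observation forces $\Bbbk I\cdot P(S)=0$.

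To conclude: a $\Bbbk M$-projective module that is also a $\Bbbk M/\Bbbk I$-module is automatically $\Bbbk M/\Bbbk I$-projective (any $\Bbbk M/\Bbbk I$-surjection is a $\Bbbk M$-surjection, and any $\Bbbk M$-linear lift between two $\Bbbk M/\Bbbk I$-modules is $\Bbbk M/\Bbbk I$-linear), so every indecomposable and hence every projective $\Bbbk M/\Bbbk I$-module is projective in $\modu{\Bbbk M}$. The main obstacle is the apex-control step: one must identify $P(S)$ as an induced module and then invoke Theorem~\ref{t:clifford.munn.poni}(iii) to locate every composition factor of $P(S)$ at or above $e$ in the $\mathscr J$-order, which is where the non-trivial direction of Clifford--Munn--Ponizovski{\u\i} theory enters essentially.
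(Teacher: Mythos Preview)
The paper does not prove this lemma; it is cited from~\cite[Lemma~3.3]{rrbg}. Your overall strategy, however, breaks down: it is simply not true that every projective $\Bbbk M/\Bbbk I$-module is projective over $\Bbbk M$, even for the monoids treated in this paper. Take $M=\mathfrak T_3$, $\Bbbk=\mathbb C$, and $I$ the ideal of maps of rank at most $2$, so that $\Bbbk M/\Bbbk I\cong\mathbb C\mathfrak S_3$ is semisimple and every $\Bbbk M/\Bbbk I$-module is projective. But $L((3))$ is not projective over $\mathbb C\mathfrak T_3$: the identity $\sum_\lambda\dim L(\lambda)\dim P(\lambda)=|\mathfrak T_3|=27$, together with the values $\dim P((1^r))=\binom{3}{r}$ from Corollary~\ref{c:proj.indec} and the constraint that each $P(\lambda)$ has a standard filtration, forces $\dim P((3))=4$. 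The same example refutes your identification $P(S)\cong\Ind_{G_e}(P_{G_e}(V))$: with $e=e_3$ and $V=S((3))$ one has $\Ind_{G_{e_3}}(S((3)))=L((3))$ of dimension $1$, not $4$. Exactness and indecomposability-preservation of $\Ind_{G_e}$ do not yield projectivity; that would require $\Bbbk L_e$ to be $\Bbbk M$-projective, and here $\mathbb C L_{e_3}\cong\mathbb C\mathfrak S_3$ has the non-projective summand $L((3))$.

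What does work is the mirror-image argument: one shows that $\Bbbk I$ (rather than $\Bbbk M/\Bbbk I$) is projective as a $\Bbbk M$-module. Combined with the idempotency $(\Bbbk I)^2=\Bbbk I$ that you correctly verified, this gives $\mathrm{Tor}^{\Bbbk M}_n(\Bbbk M/\Bbbk I,V)=0$ for every $n\geq 1$ and every $\Bbbk M/\Bbbk I$-module $V$: the vanishing for $n\geq 2$ comes from projectivity of $\Bbbk I$ via the long exact sequence attached to $0\to\Bbbk I\to\Bbbk M\to\Bbbk M/\Bbbk I\to 0$, while for $n=1$ idempotency gives $\Bbbk I\otimes_{\Bbbk M}V=0$ directly (write $x=\sum a_ib_i$ with $a_i,b_i\in\Bbbk I$ and move $b_i$ across the tensor). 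Hence $\Bbbk M/\Bbbk I\otimes_{\Bbbk M}(-)$ carries any $\Bbbk M$-projective resolution of $V$ to a $\Bbbk M/\Bbbk I$-projective resolution, and the $\Ext$-isomorphism follows from the adjunction $\Hom_{\Bbbk M/\Bbbk I}(\Bbbk M/\Bbbk I\otimes_{\Bbbk M}P,W)\cong\Hom_{\Bbbk M}(P,W)$.
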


The author and Margolis proved in~\cite[Lemma~3.5]{rrbg} the following lemma in the same vein as the Eckmann-Shapiro lemma from group cohomology.

\begin{lemma}\label{l:eck.shap}
Let $M$ be a finite regular monoid and $\Bbbk$ a field.  Let $e\in E(M)$ and  $I=MeM\setminus J_e$.  Then, for any $\Bbbk G_e$-module $V$ and $\Bbbk M/\Bbbk I$-module $W$, one has natural isomorphisms
\begin{align*}
\Ext^n_{\Bbbk M}(\Ind_{G_e}(V),W)&\cong \Ext^n_{\Bbbk G_e}(V,\Res_{G_e}(W))\\
\Ext^n_{\Bbbk M}(W,\Coind_{G_e}(V))&\cong \Ext^n_{\Bbbk G_e}(\Res_{G_e}(W),V)
\end{align*}
for all $n\geq 0$.
\end{lemma}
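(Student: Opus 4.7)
The plan is to prove the first isomorphism via a standard Eckmann--Shapiro argument performed inside the quotient algebra $\Bbbk M/\Bbbk I$, and then to dualize. The crucial preliminary step is to pass from $\Bbbk M$ to $\Bbbk M/\Bbbk I$ using Lemma~\ref{l:ideal.reg.mon}. Note that $I=MeM\setminus J_e\subseteq I(e)$, and since $\Ind_{G_e}(V)$ is naturally an $A_e$-module (hence also a $\Bbbk M/\Bbbk I$-module) and $W$ is given as a $\Bbbk M/\Bbbk I$-module, Lemma~\ref{l:ideal.reg.mon} reduces both desired statements to the computation of $\Ext$ inside $\Bbbk M/\Bbbk I$.

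Next I want to identify $(\Bbbk M/\Bbbk I)e$ with $\Bbbk L_e$ as left $\Bbbk M$-modules. Since $M$ is regular, the principal left ideal $Me$ decomposes as the disjoint union $L_e\sqcup (Me\cap I)$: an element $me$ satisfies $Mme\subsetneq Me$ iff $me\notin L_e$, and in that case $MmeM\subsetneq MeM$ forces $me\in I$. Therefore $(\Bbbk M/\Bbbk I)e=\Bbbk Me/\Bbbk(Me\cap I)\cong \Bbbk L_e$, giving projectivity of $\Bbbk L_e$ over $\Bbbk M/\Bbbk I$. Dually $e(\Bbbk M/\Bbbk I)\cong \Bbbk R_e$ is a projective right $\Bbbk M/\Bbbk I$-module, and $e(\Bbbk M/\Bbbk I)e\cong \Bbbk G_e$. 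Consequently the standard tensor-hom adjunctions give, for any $\Bbbk G_e$-module $V$ and $\Bbbk M/\Bbbk I$-module $W$,
\[
\Hom_{\Bbbk M/\Bbbk I}(\Ind_{G_e}(V),W)\cong \Hom_{\Bbbk G_e}(V,eW)=\Hom_{\Bbbk G_e}(V,\Res_{G_e}(W)),
\]
and the dual adjunction for $\Coind_{G_e}$.

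To upgrade to $\Ext$, take a projective resolution $P_\bullet\to V$ over $\Bbbk G_e$. Because $\Ind_{G_e}=\Bbbk L_e\otimes_{\Bbbk G_e}-$ is exact (as $\Bbbk L_e$ is free over $\Bbbk G_e$ on the right) and, being left adjoint to the exact functor $\Res_{G_e}=e(-)$, sends projectives to projectives, $\Ind_{G_e}(P_\bullet)\to \Ind_{G_e}(V)$ is a projective resolution over $\Bbbk M/\Bbbk I$. Applying $\Hom_{\Bbbk M/\Bbbk I}(-,W)$, using the adjunction pointwise, and taking cohomology yields
\[
\Ext^n_{\Bbbk M/\Bbbk I}(\Ind_{G_e}(V),W)\cong \Ext^n_{\Bbbk G_e}(V,\Res_{G_e}(W)),
\]
which combined with Lemma~\ref{l:ideal.reg.mon} gives the first isomorphism. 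The second isomorphism is dual: take an injective resolution $V\to I^\bullet$ over $\Bbbk G_e$, apply the exact functor $\Coind_{G_e}$ (exact since $\Bbbk R_e$ is free as a left $\Bbbk G_e$-module), which preserves injectives because it is right adjoint to the exact $\Res_{G_e}$, and apply $\Hom_{\Bbbk M/\Bbbk I}(W,-)$ together with the adjunction.

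The main technical point, and the only one requiring semigroup-theoretic input, is the identification $(\Bbbk M/\Bbbk I)e\cong \Bbbk L_e$ (and its right-handed analogue), which is what makes $\Ind_{G_e}(V)$ and $\Coind_{G_e}(V)$ behave homologically over the larger algebra $\Bbbk M/\Bbbk I$ as cleanly as they do over $A_e$; once this is in hand, the argument is a mechanical Eckmann--Shapiro/adjunction computation. Naturality in $V$ and $W$ is automatic from the naturality of the tensor-hom adjunction.
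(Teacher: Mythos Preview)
Your proof is correct; the paper itself does not prove this lemma but cites it as \cite[Lemma~3.5]{rrbg}, noting only that it is ``in the same vein as the Eckmann--Shapiro lemma from group cohomology.'' Your argument is exactly such an Eckmann--Shapiro computation, with the identification $(\Bbbk M/\Bbbk I)e\cong\Bbbk L_e$ (via stability of Green's relations in a finite monoid) supplying the projectivity over $\Bbbk M/\Bbbk I$ that makes the adjunction upgrade to $\Ext$.
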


Since $\Bbbk G_e$ is semisimple whenever $\Bbbk$ is of characteristic zero, we obtain the following corollary.

\begin{corollary}\label{c:kill.above}
Let $M$ be a finite regular monoid and $\Bbbk$ a field of characteristic $0$.  Let $e\in E(M)$ and let $I=MeM\setminus J_e$. Then, for any $\Bbbk G_e$-module $V$ and $\Bbbk M/\Bbbk I$-module $W$, one has
\begin{align*}
\Ext^n_{\Bbbk M}(\Ind_{G_e}(V),W)=0\\
\Ext^n_{\Bbbk M}(W,\Coind_{G_e}(V))=0
\end{align*}
for all $n\geq 1$.
\end{corollary}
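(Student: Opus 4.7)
The plan is essentially immediate from Lemma~\ref{l:eck.shap} combined with Maschke's theorem. Since $M$ is finite, each maximal subgroup $G_e$ is a finite group; since $\Bbbk$ has characteristic $0$, the group algebra $\Bbbk G_e$ is semisimple. A semisimple finite dimensional algebra has global dimension $0$, so $\Ext^n_{\Bbbk G_e}(X,Y) = 0$ for any $\Bbbk G_e$-modules $X,Y$ and any $n \geq 1$.

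To deduce the first vanishing, I would feed the hypotheses into the first isomorphism of Lemma~\ref{l:eck.shap}: for any $\Bbbk G_e$-module $V$ and any $\Bbbk M/\Bbbk I$-module $W$,
\[
\Ext^n_{\Bbbk M}(\Ind_{G_e}(V), W) \;\cong\; \Ext^n_{\Bbbk G_e}(V, \Res_{G_e}(W)).
\]
The right-hand side is zero for $n \geq 1$ by the semisimplicity observation above, so the left-hand side vanishes as claimed. The second vanishing is handled symmetrically using the second isomorphism of Lemma~\ref{l:eck.shap}:
\[
\Ext^n_{\Bbbk M}(W, \Coind_{G_e}(V)) \;\cong\; \Ext^n_{\Bbbk G_e}(\Res_{G_e}(W), V),
\]
which vanishes for $n \geq 1$ by the same reasoning.

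There is no genuine obstacle here; the corollary is a one-line consequence of Lemma~\ref{l:eck.shap} once one notes that in characteristic zero the Ext groups over $\Bbbk G_e$ reduce to $\Hom_{\Bbbk G_e}$ in degree $0$ and to zero in all positive degrees. The only point worth emphasizing in the write-up is that the hypothesis ``$W$ is a $\Bbbk M/\Bbbk I$-module'' is exactly what makes Lemma~\ref{l:eck.shap} applicable with the given choice $I = MeM\setminus J_e$.
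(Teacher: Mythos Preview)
Your proposal is correct and follows exactly the same approach as the paper, which simply notes that $\Bbbk G_e$ is semisimple in characteristic zero and invokes Lemma~\ref{l:eck.shap}. Your write-up spells out the details (Maschke's theorem, global dimension $0$, the role of the hypothesis on $W$) that the paper leaves implicit.
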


Nico~\cite{Nico1,Nico2}  proved that the global dimension of a regular monoid over a field of characteristic zero is always finite.
\begin{theorem}[Nico]
Let $M$ be a finite regular monoid and let $\Bbbk$ be a field of characteristic $0$.  Then $\gldim \Bbbk M$ is bounded by $2(m-1)$ where $m$ is the length of the longest chain of non-zero principal ideals of $M$.
\end{theorem}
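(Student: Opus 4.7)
My plan is to induct on $m$. For the base case $m=1$, $M$ has a single non-zero principal ideal, so every element is $\mathscr J$-equivalent to the identity; finiteness and regularity then force $M$ to be a finite group, so $\Bbbk M$ is semisimple in characteristic zero and $\gldim \Bbbk M = 0 = 2(m-1)$.

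For the inductive step, I will let $J$ be the minimum $\mathscr J$-class of $M$, which coincides with the minimum two-sided ideal in a finite regular monoid. For any idempotent $e\in J$ we have $MeM = J_e = J$, so the ideal $I = MeM\setminus J_e$ appearing in Corollary~\ref{c:kill.above} is empty, whence
\[
\Ext^n_{\Bbbk M}(\Ind_{G_e}(V),W) = 0 = \Ext^n_{\Bbbk M}(W,\Coind_{G_e}(V))
\]
for every $n\geq 1$, every $\Bbbk G_e$-module $V$, and every $\Bbbk M$-module $W$. Thus $\Ind_{G_e}(V)$ is projective and $\Coind_{G_e}(V)$ is injective whenever $e\in J$, and Theorem~\ref{t:clifford.munn.poni} identifies them as the projective cover and injective hull of the associated simple module. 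Consequently each simple $\Bbbk M$-module $S$ with apex $e_S\in J$ sits in short exact sequences
\[
0 \to K_S \to \Ind_{G_{e_S}}(e_SS) \to S \to 0, \qquad 0 \to S \to \Coind_{G_{e_S}}(e_SS) \to C_S \to 0,
\]
whose middle terms are projective/injective and in which $K_S, C_S$ have every composition factor of apex strictly above $J$, so both are annihilated by $\Bbbk J$ and hence are $\Bbbk M/\Bbbk J$-modules. Next, I will observe that $M/J$ is a finite regular monoid with zero whose longest chain of non-zero principal ideals has length $m-1$, and that the central idempotent $1-\bar 0\in \Bbbk[M/J]$ splits this algebra as $\Bbbk[M/J]\cong (\Bbbk M/\Bbbk J)\times \Bbbk$, so the inductive hypothesis yields $\gldim \Bbbk M/\Bbbk J \leq 2(m-2)$.

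To conclude, given simple $\Bbbk M$-modules $S,S'$ with apexes $e_S,e_{S'}$, I will bound $\Ext^n_{\Bbbk M}(S,S')$ by cases on whether these apexes lie in $J$. If neither does, both $S,S'$ are $\Bbbk M/\Bbbk J$-modules and Lemma~\ref{l:ideal.reg.mon} together with the inductive bound yields vanishing for $n>2(m-2)$. If only $e_S\in J$, the long exact sequence associated to the projective-cover SES gives $\Ext^n_{\Bbbk M}(S,S')\cong \Ext^{n-1}_{\Bbbk M/\Bbbk J}(K_S,S')$ for $n\geq 2$, vanishing for $n>2m-3$; the case only $e_{S'}\in J$ is dual using the injective-hull SES. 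In the worst case where both apexes lie in $J$, applying the projective-cover reduction to $S$ and the injective-hull reduction to $S'$ in succession yields
\[
\Ext^n_{\Bbbk M}(S,S') \cong \Ext^{n-2}_{\Bbbk M/\Bbbk J}(K_S,C_{S'}) \quad\text{for } n\geq 3,
\]
which vanishes for $n>2(m-1)$. Since the global dimension is controlled by $\Ext$ between simples, this completes the bound. The most delicate step will be confirming that the contracted quotient algebra $\Bbbk M/\Bbbk J$ is legitimately covered by the inductive hypothesis, since it is the algebra of a monoid-with-zero rather than of a genuine monoid; this hinges on the central decomposition above and on verifying that the Rees quotient $M/J$ remains finite, regular, and has chain length exactly $m-1$.
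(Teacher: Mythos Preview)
The paper does not actually prove this theorem; it is stated as background and attributed to Nico with citations to~\cite{Nico1,Nico2}. So there is no proof in the paper to compare against.

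Your argument is correct. It is, in essence, the standard inductive proof that a quasi-hereditary algebra with an $m$-step heredity chain has global dimension at most $2(m-1)$, carried out here entirely with the monoid-specific tools assembled in Section~2 (Theorem~\ref{t:clifford.munn.poni}, Lemma~\ref{l:ideal.reg.mon}, Corollary~\ref{c:kill.above}). The key observations --- that for $e$ in the minimal ideal $J$ the set $MeM\setminus J_e$ is empty so that $\Ind_{G_e}(V)$ is projective and $\Coind_{G_e}(V)$ is injective for every $\Bbbk G_e$-module $V$, and that the remaining composition factors of these modules live over $\Bbbk M/\Bbbk J$ --- are exactly right, and the four-case analysis via long exact sequences is clean.

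Two small points worth tightening. First, in your base case you assert that $m=1$ forces $M$ to be a group; strictly speaking, if $M$ already has a zero element then $M$ is a group with zero adjoined. Either way $\Bbbk M$ is semisimple, so the conclusion is unaffected, but since your induction manufactures monoids with zero (namely $M/J$) you should phrase the base case to cover this. Second, the passage from ``every composition factor of $K_S$ has apex outside $J$'' to ``$\Bbbk J\cdot K_S=0$'' deserves one line: since $\Bbbk J=\Bbbk M e\Bbbk M$ for an idempotent $e\in J$, the ideal $\Bbbk J$ is idempotent, so annihilating each layer of a composition series forces annihilation of the whole module.
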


\subsection{The character theory of the full transformation monoid}

The character theory of $\mathfrak T_n$ has a very long history, beginning with the work of Hewitt and Zuckerman~\cite{HewZuck}. A complete computation of the character table of $\mathfrak T_n$ was finally achieved by Putcha nearly 40 years later~\cite[Theorem~2.1]{Putcharep5}.  To formulate his result, first let $e_r\in \mathfrak T_n$ be the idempotent given by
\begin{equation}\label{eq:some.idems}
e_r(i) = \begin{cases}i, & \text{if}\ i\leq r\\ 1, & \text{if}\ i>r\end{cases}
\end{equation}
and note that $e_1,\ldots, e_n$ form a complete set of idempotent representatives of the $\mathscr J$-classes  of $\mathfrak T_n$ and $e_r\mathfrak T_ne_r\cong \mathfrak T_r$, whence $G_{e_r}\cong \mathfrak S_r$. The isomorphism takes $f\in e_r\mathfrak T_ne_r$ to $f|_{[r]}$ where $[r]=\{1,\ldots,r\}$.

The reader is referred to~\cite{jameskerber} for the representation theory of the symmetric group.  If $\lambda$ is a partition of $n$, then $S(\lambda)$ will denote the corresponding simple module (Specht module).  We put $L(\lambda)=\simlift{S(\lambda)}$ to avoid cumbersome notation.
Let us use $1^k$ as short hand for a sequence of $k$ ones occurring in a partition. With this notation, $S((1^r))$ is the sign representation of $\mathfrak S_r$. On the other hand, $S((r))$ is the trivial representation of $\mathfrak S_r$.    Thus we consider $S((1)))$ to be both the trivial and the sign representation of  $\mathfrak S_1$.

\begin{theorem}[Putcha]\label{t:full.trans.char}
Fix $n\geq 1$ and let $S(\lambda)\in \Irr_{\mathbb C}(\mathfrak S_r)$ for $1\leq r\leq n$.
\begin{enumerate}[(i)]
\item If $\lambda\neq (1^r)$, then $\Ind_{\mathfrak S_r}(S(\lambda))$ is simple (and hence equal to $L(\lambda)$). Moreover, its restriction to $\mathbb C\mathfrak S_n$ is isomorphic to the induced module $\mathbb C\mathfrak S_n\otimes_{\mathbb C[\mathfrak S_r\times S_{n-r}]}(S(\lambda)\otimes S((n-r)))$.
\item If $\lambda=(1^r)$, then  the restriction of $L((1^r))$  to $\mathbb C\mathfrak S_n$ is the simple module $S((n-r+1,1^{r-1}))$ of dimension $\binom{n-1}{r-1}$.
\end{enumerate}
\end{theorem}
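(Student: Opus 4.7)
The second assertion of (i) is a direct bimodule computation. Each $\ell\in L_{e_r}$ has the same kernel partition as $e_r$ (by the $\mathscr L$-class description in $\mathfrak{T}_n$ recalled earlier), so $\ell$ is determined by an ordered $r$-tuple of distinct images of the $r$ fibers of $e_r$; this identifies $L_{e_r}$ with the set of injective functions $[r]\to[n]$ in a way that turns the left $\mathfrak{S}_n$-action into postcomposition and the right $G_{e_r}\cong\mathfrak{S}_r$-action into precomposition. Consequently $\mathbb{C}L_{e_r}\cong \mathbb{C}\mathfrak{S}_n\otimes_{\mathbb{C}\mathfrak{S}_{n-r}}\mathbb{C}$ as a $(\mathbb{C}\mathfrak{S}_n,\mathbb{C}\mathfrak{S}_r)$-bimodule, with $\mathfrak{S}_{n-r}$ the pointwise stabilizer of $[r]$ in $\mathfrak{S}_n$; tensoring with $S(\lambda)$ over $\mathbb{C}\mathfrak{S}_r$ then yields the claimed doubly induced Specht module.

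The remaining assertions I would prove by downward induction on $r$, combining (i) and (ii); the base case $r=n$ is immediate because $\Ind_{\mathfrak{S}_n}(S(\lambda))=S(\lambda)$. For (ii) at rank $r<n$, I would identify $\Ind_{\mathfrak{S}_r}(S((1^r)))\cong\bigwedge^r\mathbb{C}^n$ (where $\mathfrak{T}_n$ acts on $\mathbb{C}^n$ by permuting the standard basis) and consider the submodule $\bigwedge^r\Aug{\mathbb{C}^n}$ coming from the $\mathfrak{T}_n$-invariant augmentation subspace $\Aug{\mathbb{C}^n}$. A direct calculation shows that every $\phi\in\mathfrak{T}_n$ of rank at most $r$ annihilates $\bigwedge^r\Aug{\mathbb{C}^n}$, since $\dim\phi(\Aug{\mathbb{C}^n})\leq r-1$, while $e_{r+1}$ does not; together with the classical identification of the $\mathfrak{S}_n$-restriction of $\bigwedge^r\Aug{\mathbb{C}^n}$ with the simple Specht module $S((n-r,1^r))$, this shows $\bigwedge^r\Aug{\mathbb{C}^n}$ is a simple $\mathbb{C}\mathfrak{T}_n$-module with apex $e_{r+1}$, which the inductive hypothesis then identifies with $L((1^{r+1}))$. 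The quotient $\bigwedge^r\mathbb{C}^n/\bigwedge^r\Aug{\mathbb{C}^n}$ has simple $\mathfrak{S}_n$-restriction $S((n-r+1,1^{r-1}))$, so it is simple as a $\mathbb{C}\mathfrak{T}_n$-module and must coincide with the simple top $L((1^r))$ of $\Ind_{\mathfrak{S}_r}(S((1^r)))$, giving (ii).

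For (i) with $\lambda\neq(1^r)$, the top of $\Ind_{\mathfrak{S}_r}(S(\lambda))$ is $L(\lambda)$ and any further composition factor has the form $L(\mu')$ with $\mu'\vdash s>r$; I would rule these out by comparing $\mathfrak{S}_n$-restrictions. Pieri's rule decomposes $\Res\Ind_{\mathfrak{S}_r}(S(\lambda))$ as a multiplicity-free sum of Specht modules $S(\mu)$ over horizontal-strip extensions $\mu/\lambda$ of size $n-r$, and the inductive hypothesis yields $\Res L(\mu')$ explicitly (the single hook $S((n-s+1,1^{s-1}))$ if $\mu'=(1^s)$, the analogous Pieri expansion otherwise). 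The column case $\mu'=(1^s)$ is excluded by a horizontal-strip computation showing the hook $(n-s+1,1^{s-1})$ extends $\lambda$ only when $\lambda$ is itself a column. The main obstacle is the non-column case: for each $\mu'\neq(1^s)$ one must exhibit a specific Pieri extension of $\mu'$ whose skew over $\lambda$ fails the horizontal-strip condition, which requires a careful case analysis on the shapes of $\mu'$ and $\lambda$.
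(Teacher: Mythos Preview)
The paper does not prove part (i); it attributes the result to Putcha and refers to~\cite{SteinbergBook} for a proof. The only part argued in the paper is (ii), via Theorem~\ref{t:alt.rep.ft}, and your treatment of (ii) is essentially the same: you realise $L((1^r))$ as an exterior power of the augmentation submodule, read off the apex from $\dim\phi(\Aug{\mathbb{C}^n})\leq\rk(\phi)-1$, and deduce simplicity from simplicity of the $\mathfrak{S}_n$-restriction. The paper does this directly for $\Lambda^{r-1}(\Aug{\mathbb{C}^n})$ rather than inside a downward induction, but the content is identical. Your bimodule computation for the second assertion of (i) is also correct.

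Your proposed strategy for the first assertion of (i), however, does not work as stated. You plan to rule out each potential extra composition factor $L(\mu')$, with $|\mu'|=s>r$ and $\mu'\neq(1^s)$, by producing some partition $\nu$ of $n$ with $\nu/\mu'$ a horizontal strip but $\nu/\lambda$ not one. Such a $\nu$ need not exist. Take $n=4$, $\lambda=(2)$, $\mu'=(3)$: the horizontal-strip extensions of $(3)$ by one box are $(4)$ and $(3,1)$, and both $(4)/(2)$ and $(3,1)/(2)$ are horizontal strips of size~$2$, so every constituent of $\Res_{\mathfrak{S}_4} L((3))$ already lies in the Pieri expansion of $(2)$. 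Your test therefore cannot exclude $L((3))$. The difficulty is not that the ``careful case analysis'' is long; the statement you would need is simply false. Any argument along these lines must be more global---for instance, exploiting that the total Pieri expansion is multiplicity-free together with independent information about $\Res_{\mathfrak{S}_n}L(\lambda)$ coming from the coinduced side---or must proceed by a different route entirely.
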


Let us remark that Putcha~\cite{Putcharep5} uses very different notation than ours.  If $\theta$ is an irreducible representation of $\mathfrak S_r$ afforded by a simple module $V$, then Putcha uses $\theta^+$ to denote the representation afforded by $\Ind_{\mathfrak S_r}(V)$ and $\theta^-$ to denote the representation afforded by $\Coind_{\mathfrak S_r}(V)$.  He uses $\til \theta$ for the representation afforded by $\simlift{V}$.  A proof of Theorem~\ref{t:full.trans.char} can be found in~\cite[Chapter~5]{SteinbergBook}.

We proceed to give a new proof of Theorem~\ref{t:full.trans.char}(ii), which is simpler than Putcha's, by exhibiting the simple module.   Putcha has informed the author that he is aware of this construction.    The key observation is that $S((n-r+1,1^{r-1}))$ is an exterior power of the representation $S((n-1,1))$ of $\mathfrak S_n$.

Note that $\mathbb C^n$ is a $\mathbb C\mathfrak T_n$-module by defining $fv_i = v_{f(i)}$, for $f\in \mathfrak T_n$, where $v_1,\ldots, v_n$ denotes the standard basis for $\mathbb C^n$.  We call this the \emph{natural module}.  Moreover, the \emph{augmentation}
\[\Aug{\mathbb C^n}=\{(x_1,\ldots,x_n)\in \mathbb C^n\mid x_1+\cdots+x_n=0\}\] is a $\mathbb C\mathfrak T_n$-submodule and $\mathbb C^n/\Aug{\mathbb C^n}$ is the trivial $\mathbb C\mathfrak T_n$-module.

\begin{theorem}\label{t:alt.rep.ft}
Let $n\geq 1$ and let $V=\Aug{\mathbb C^n}$.  Then the exterior power $\Lambda^{r-1}(V)$,  for $1\leq r\leq n$, is a simple $\mathbb C\mathfrak T_n$-module with apex $e_{r}$ and with $e_r\Lambda^{r-1}(V)\cong S((1^r))$ the sign representation of $G_{e_r}\cong \mathfrak S_r$.  In other words, $L((1^r))=\Lambda^{r-1}(V)$.
\end{theorem}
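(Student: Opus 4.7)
My plan is to apply the Clifford--Munn--Ponizovski\u\i\ bijection of Theorem~\ref{t:clifford.munn.poni}: it suffices to establish that (i) $\Lambda^{r-1}(V)$ is simple as a $\mathbb{C}\mathfrak{T}_n$-module, (ii) $\mathbb{C}I(e_r)\cdot\Lambda^{r-1}(V)=0$, so that $e_r$ is an apex, and (iii) $e_r\Lambda^{r-1}(V)\cong S((1^r))$ as a $G_{e_r}\cong\mathfrak{S}_r$-module. Together these force $\Lambda^{r-1}(V)\cong\simlift{S((1^r))}=L((1^r))$.

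For (i), I would pass to $\mathfrak{S}_n$. As an $\mathfrak{S}_n$-representation, $V$ is the standard $(n-1)$-dimensional irreducible $S((n-1,1))$, and a classical result in the representation theory of the symmetric group (see~\cite{jameskerber}) identifies $\Lambda^{r-1}(S((n-1,1)))$ with the hook-shaped simple module $S((n-r+1,1^{r-1}))$. Hence $\Lambda^{r-1}(V)$ is $\mathfrak{S}_n$-simple; since any $\mathbb{C}\mathfrak{T}_n$-submodule is in particular $\mathfrak{S}_n$-invariant, this forces simplicity over $\mathbb{C}\mathfrak{T}_n$ as well.

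The key linear-algebra input for (ii) is that $\dim f(V)=\mathrm{rank}(f)-1$ for every $f\in\mathfrak{T}_n$. Viewing $f$ as a linear operator on $\mathbb{C}^n$, a vector $\sum x_iv_i$ lies in $\ker f$ iff $\sum_{i\in f^{-1}(j)}x_i=0$ for every $j$ in the image of $f$; summing these relations over $j$ gives $\sum_i x_i=0$, so $\ker f\subseteq V$ and hence $\dim f(V)=\dim V-\dim\ker f=\mathrm{rank}(f)-1$. Since $I(e_r)=\{f:\mathrm{rank}(f)<r\}$ (because $e_r\in\mathfrak{T}_nf\mathfrak{T}_n$ iff $\mathrm{rank}(f)\geq r$), whenever $f\in I(e_r)$ we get $\dim f(V)<r-1$, forcing $f$ to act as zero on $\Lambda^{r-1}(V)$. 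For (iii), $e_r$ acts on $\mathbb{C}^n$ as projection onto $\mathrm{span}(v_1,\ldots,v_r)$, so $e_rV=\Aug{\mathrm{span}(v_1,\ldots,v_r)}$, which is the standard $(r-1)$-dimensional representation of $G_{e_r}\cong\mathfrak{S}_r$. Since $e_r$ is idempotent, $e_r\Lambda^{r-1}(V)=\Lambda^{r-1}(e_rV)$; from the $\mathfrak{S}_r$-decomposition $\mathbb{C}^r\cong\mathbb{C}\oplus e_rV$ we obtain $\Lambda^{r-1}(e_rV)\cong\Lambda^r(\mathbb{C}^r)$, the top exterior power of the permutation representation and hence the sign character $S((1^r))$.

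The only real computation is the identity $\dim f(V)=\mathrm{rank}(f)-1$; once that is in hand the rest reduces to bookkeeping with Clifford--Munn--Ponizovski\u\i\ theory. The main obstacle is the appeal to the classical identification $\Lambda^{r-1}(S((n-1,1)))\cong S((n-r+1,1^{r-1}))$, which gives simplicity of $\Lambda^{r-1}(V)$ for free; a self-contained treatment would have to establish this identification via characters or an explicit equivariant construction.
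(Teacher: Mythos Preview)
Your proof is correct and follows essentially the same route as the paper's: both deduce simplicity of $\Lambda^{r-1}(V)$ from the classical identification $\Lambda^{r-1}(\Aug{\mathbb C^n})\cong S((n-r+1,1^{r-1}))$ over $\mathfrak S_n$, and both pin down the apex by a dimension count together with the computation $e_r\Lambda^{r-1}(V)=\Lambda^{r-1}(e_rV)\cong S((1^r))$. The only cosmetic difference is that the paper shows annihilation by $I(e_r)$ by checking $e_m\Lambda^{r-1}(V)=\Lambda^{r-1}(\Aug{\mathbb C^m})=0$ for each idempotent $e_m$ with $m<r$, whereas you argue directly via $\dim f(V)=\operatorname{rank}(f)-1$ for arbitrary $f$; these are equivalent since every $f$ of rank $m$ lies in $\mathfrak T_ne_m\mathfrak T_n$.
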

\begin{proof}
If $1\leq m\leq n$, then  clearly $e_m\mathbb C^n\cong \mathbb C^m$ and $e_m\Aug{\mathbb C^n}\cong \Aug{\mathbb C^m}$ under the identification of $e_m\mathbb C \mathfrak T_ne_m=\mathbb C[e_m\mathfrak T_ne_m]$ with $\mathbb C\mathfrak T_m$ induced by restricting an element of $e_m\mathfrak T_ne_m$ to $[m]$.   In particular, as $e_m\Lambda^{r-1}(V)=\Lambda^{r-1}(e_mV)$ and $\dim e_mV=\dim \Aug{\mathbb C^m}=m-1$, we conclude that $e_m\Lambda^{r-1}(V)=0$ if $m<r$.

Let us now assume that $m\geq r$.  By~\cite[Proposition~3.12]{FultonHarris}, the exterior power $\Lambda^{r-1}(e_mV)\cong \Lambda^{r-1}(\Aug{\mathbb C^m})$ is a simple $\mathbb C\mathfrak S_m$-module of degree $\binom{m-1}{r-1}$ and by~\cite[Exercise~4.6]{FultonHarris} it is, in fact, the Specht module $S((m-r+1,1^{r-1}))$.  In particular, we have that $e_r\Lambda^{r-1}(V)=S((1^r))$.  It follows that $W=\Lambda^{r-1}(V)$ is a simple $\mathbb C\mathfrak T_n$-module with apex $e_r$ and that $W\cong L((1^r))$.
\end{proof}

\section{The global dimension of $\mathbb C\mathfrak T_n$}
In this section, we prove that the global dimension of $\mathbb C\mathfrak T_n$ is $n-1$.  Our first goal is to provide the minimal projective resolutions of the exterior powers of $\Aug{\mathbb C^n}$.  We retain the notation of the previous section.

\subsection{Minimal projective resolutions of the exterior powers}
We continue to denote the natural $\mathbb C\mathfrak T_n$-module by $\mathbb C^n$.  It turns out that each exterior power of $\mathbb C^n$ is a projective indecomposable module.

\begin{theorem}\label{t:ext.is.proj}
For each $r$ with $1\leq r\leq n$, the $\mathbb C\mathfrak T_n$-module $\Lambda^r(\mathbb C^n)$ is a projective indecomposable module.
\end{theorem}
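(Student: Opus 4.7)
The plan is to realize $\Lambda^r(\mathbb C^n)$ as $\mathbb C\mathfrak T_n\epsilon_r$ for an explicit primitive idempotent $\epsilon_r$. Take $\epsilon_r\in \mathbb C[G_{e_r}]\subseteq e_r\mathbb C\mathfrak T_n e_r$ to be the sign idempotent
\[\epsilon_r=\frac{1}{r!}\sum_{\sigma\in G_{e_r}}\sgn(\sigma)\,\sigma\]
of $G_{e_r}\cong \mathfrak S_r$. Since $e_r$ is the identity of $G_{e_r}$, one has $\epsilon_r=e_r\epsilon_r=\epsilon_r e_r$ and $\epsilon_r^2=\epsilon_r$, so both interpretations of ``left ideal generated by $\epsilon_r$'' agree.

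The first step is to identify $\mathbb C\mathfrak T_n e_r$ with $(\mathbb C^n)^{\otimes r}$ as a $\mathbb C\mathfrak T_n$-module. The set $\mathfrak T_n e_r$ is precisely the set of $g\in\mathfrak T_n$ with $g(i)=g(1)$ for all $i>r$, so each such $g$ is determined by the tuple $(g(1),\ldots,g(r))\in[n]^r$. The map $g\mapsto v_{g(1)}\otimes\cdots\otimes v_{g(r)}$ is then an isomorphism of left $\mathbb C\mathfrak T_n$-modules, and under this identification right multiplication by $\sigma\in G_{e_r}$ corresponds to the standard tensor-permutation action $w_1\otimes\cdots\otimes w_r\mapsto w_{\sigma(1)}\otimes\cdots\otimes w_{\sigma(r)}$ of $\mathfrak S_r$. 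Consequently right multiplication by $\epsilon_r$ is antisymmetrization on $(\mathbb C^n)^{\otimes r}$, whose image is $\Lambda^r(\mathbb C^n)$, giving $\mathbb C\mathfrak T_n\epsilon_r=(\mathbb C\mathfrak T_n e_r)\epsilon_r\cong \Lambda^r(\mathbb C^n)$ and proving projectivity.

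For indecomposability it suffices to show that $\epsilon_r$ is primitive, which I would establish by computing $\epsilon_r\mathbb C\mathfrak T_n\epsilon_r=\mathbb C\epsilon_r$. Using $\epsilon_r=e_r\epsilon_r=\epsilon_r e_r$ and the isomorphism $e_r\mathbb C\mathfrak T_n e_r\cong\mathbb C\mathfrak T_r$ from Section~2.4, this reduces to $\epsilon_r\mathbb C\mathfrak T_r\epsilon_r=\mathbb C\epsilon_r$, where $\epsilon_r$ now means the sign idempotent in $\mathbb C\mathfrak S_r\subseteq\mathbb C\mathfrak T_r$. Decompose $\mathbb C\mathfrak T_r=\mathbb C\mathfrak S_r\oplus\mathbb C N$ with $N=\mathfrak T_r\setminus\mathfrak S_r$ the ideal of singular maps. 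Classical symmetric-group theory yields $\epsilon_r\mathbb C\mathfrak S_r\epsilon_r=\mathbb C\epsilon_r$. For the singular part, any $f\in N$ fails to be injective on $[r]$, so there exist $i\ne j$ in $[r]$ with $f(i)=f(j)$; then $f\cdot(i\,j)=f$ while $(i\,j)\epsilon_r=-\epsilon_r$, forcing $f\epsilon_r=0$. Hence $\epsilon_r\mathbb C\mathfrak T_n\epsilon_r\cong\mathbb C$ is local, $\epsilon_r$ is primitive, and $\Lambda^r(\mathbb C^n)\cong \mathbb C\mathfrak T_n\epsilon_r$ is a projective indecomposable.

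The only nontrivial compatibility check is that the right $G_{e_r}$-action on $\mathbb C\mathfrak T_n e_r$ coming from monoid multiplication really matches the tensor-factor permutation action on $(\mathbb C^n)^{\otimes r}$; this amounts to unwinding the formula $(g\hat\sigma)|_{[r]}=g\circ\sigma$ for $\hat\sigma\in G_{e_r}$ corresponding to $\sigma\in\mathfrak S_r$. Everything else is either standard representation theory of $\mathfrak S_r$ in characteristic zero or the short combinatorial identity $f\cdot(i\,j)=f$ for non-injective self-maps.
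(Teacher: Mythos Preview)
Your proof is correct and follows essentially the same route as the paper: both construct the sign idempotent $\epsilon_r=\eta$ in $\mathbb C G_{e_r}$, identify $\mathbb C\mathfrak T_n e_r\cong(\mathbb C^n)^{\otimes r}$ so that right multiplication by $\epsilon_r$ is antisymmetrization, and then verify primitivity by showing $\epsilon_r\mathbb C\mathfrak T_n\epsilon_r\cong\mathbb C$. The only difference is in this last verification: the paper computes $e_r\mathbb C\mathfrak T_n\eta\cong e_r\Lambda^r(\mathbb C^n)=\Lambda^r(e_r\mathbb C^n)\cong S((1^r))$ and then applies $\eta$, whereas you argue directly inside $\mathbb C\mathfrak T_r$ via the combinatorial identity $f\epsilon_r=f(i\,j)\epsilon_r=-f\epsilon_r$ for singular $f$, which is a slightly more elementary and self-contained way to reach the same conclusion.
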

\begin{proof}
Let $e_r$ be the idempotent from \eqref{eq:some.idems} and $W=\mathbb C^n$. Let \[\eta=\frac{1}{r!}\sum_{g\in G_{e_r}}\sgn(g|_{[r]})g.\] We will prove that $\eta$ is a primitive idempotent of $\mathbb C\mathfrak T_n$ and that $\mathbb C\mathfrak T_n\eta\cong \Lambda^r(W)$.

Again denote by $v_1,\ldots, v_n$ the standard basis for $W$.  Observe that $\mathfrak T_ne_r$ consists of all the mappings $f\in \mathfrak T_n$ with $f(x)=f(1)$ for $x>r$.  We can thus define an isomorphism of $\mathbb C\mathfrak T_n$-modules $\rho\colon \mathbb C\mathfrak T_ne_r\to W^{\otimes r}$ by \[\rho(f) = v_{f(1)}\otimes \cdots \otimes v_{f(r)}\] for $f\in \mathfrak T_ne_r$.  We can identify $G_{e_r}$ with the symmetric group $\mathfrak S_r$ via the isomorphism $g\mapsto g|_{[r]}$.  Under this identification, we see that if $g\in G_{e_r}$ and $f\in \mathfrak T_ne_r$, then  \[\rho(fg) = v_{f(g(1))}\otimes \cdots\otimes v_{f(g(r))} = \rho(f)g|_{[r]}\] where $\mathfrak S_r$ acts on the right of $W^{\otimes r}$ by permuting the tensor factors.

We have that $\eta$ is a primitive idempotent of $\mathbb CG_{e_r}$ and $\mathbb CG_{e_r}\eta$ affords the sign representation of $G_{e_r}\cong \mathfrak S_r$.
By definition, $\Lambda^r(W) = W^{\otimes r}\otimes_{\mathbb C\mathfrak S_r} S((1^r))$ and hence \[\Lambda^r(\mathbb C^n)\cong \mathbb C\mathfrak T_ne_r\otimes_{\mathbb CG_{e_r}} S((1^r))=\mathbb C\mathfrak T_ne_r\otimes_{\mathbb CG_{e_r}} \mathbb CG_{e_r}\eta\cong \mathbb C\mathfrak T_n\eta.\]  Since $\eta\in \mathbb C\mathfrak T_n$ is an idempotent, we deduce that $\Lambda^r(\mathbb C^n)$ is a projective $\mathbb C\mathfrak T_n$-module.

It remains to prove that $\eta$ is primitive or, equivalently, that $\eta\mathbb C\mathfrak T_n\eta$ is a local ring.  In fact, we show that it is isomorphic to $\mathbb C$.  Indeed, as a $\mathbb CG_{e_r}$-module we have that
\[e_r\mathbb C\mathfrak T_n\eta\cong e_r\Lambda^r(W)=\Lambda^r(e_rW)\cong S((1^r))\] and hence there is a vector space isomorphism $\eta\mathbb C\mathfrak T_n\eta\cong \eta S((1^r))\cong \mathbb C$, as $\eta$ is the primitive idempotent of $\mathbb CG_{e_r}$ corresponding to $S((1^r))$.  Since $\dim \eta\mathbb C\mathfrak T_n\eta=1$, we conclude that $\eta\mathbb C\mathfrak T_n\eta\cong\mathbb C$ as a $\mathbb C$-algebra.  This completes the proof that $\eta$ is primitive and hence $\Lambda^r(W)$ is a projective indecomposable $\mathbb C\mathfrak T_n$-module.
\end{proof}

A crucial observation for understanding the representation theory of $\mathfrak T_n$ is the following short exact sequence for the projective indecomposable module $\Lambda^r(\mathbb C^n)$, which can be viewed as a categorification of Pascal's identity for binomial coefficients.

\begin{theorem}\label{t:ext.power.seq}
Let $1\leq r\leq n$ and let $V=\mathrm{Aug}(\mathbb C^n)$.  Then there is a short exact sequence of $\mathbb C\mathfrak T_n$-modules
\[\begin{tikzcd}0\ar{r} & \Lambda^{r}(V)\ar{r} & \Lambda^r(\mathbb C^n)\ar{r} & \Lambda^{r-1}(V)\ar{r} &0\end{tikzcd}\]
which does not split if $1\leq r<n$.  Note that $\Lambda^n(\mathbb C^n)\cong \Lambda^{n-1}(V)$ is the projective simple module $L((1^n))$.
\end{theorem}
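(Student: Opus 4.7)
My plan is to derive the stated exact sequence by applying the exterior power functor to the fundamental short exact sequence
\[0 \to V \to \mathbb{C}^n \to \mathbb{C} \to 0\]
of $\mathbb{C}\mathfrak{T}_n$-modules, where $\mathbb{C}$ denotes the trivial module and the surjection is the augmentation $\pi\colon v_i \mapsto 1$. One checks directly that $\pi$ is $\mathfrak{T}_n$-equivariant ($\pi(fv_i)=1=f\cdot\pi(v_i)$ since $\mathbb{C}$ is trivial) and that $\ker\pi = V$. Because the quotient $\mathbb{C}$ is one-dimensional, the Koszul filtration on $\Lambda^r(\mathbb{C}^n)$ has only two nonzero layers, so the outcome of the construction should be a two-term filtration with associated graded pieces $\Lambda^r(V)$ and $\Lambda^{r-1}(V)\otimes\Lambda^1(\mathbb{C})\cong\Lambda^{r-1}(V)$.

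Concretely, the injection $\Lambda^r(V)\hookrightarrow\Lambda^r(\mathbb{C}^n)$ is the one induced functorially by the inclusion $V\hookrightarrow\mathbb{C}^n$. I would construct the surjection explicitly as the interior product with $\pi$: define the $\mathbb{C}$-linear map
\[\iota_\pi\colon \Lambda^r(\mathbb{C}^n) \to \Lambda^{r-1}(\mathbb{C}^n),\qquad x_1\wedge\cdots\wedge x_r \mapsto \sum_{i=1}^r (-1)^{i-1}\pi(x_i)\,x_1\wedge\cdots\widehat{x_i}\cdots\wedge x_r.\]
Since $\pi$ is a $\mathbb{C}\mathfrak{T}_n$-module map, $\iota_\pi$ is automatically $\mathfrak{T}_n$-equivariant.

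The key calculation is that $\iota_\pi$ has image exactly $\Lambda^{r-1}(V)$ and kernel exactly the submodule $\Lambda^r(V)$. I would verify this by choosing a vector-space (not module) splitting $\mathbb{C}^n = V\oplus\mathbb{C} v_1$, which gives a $\mathbb{C}$-linear decomposition $\Lambda^r(\mathbb{C}^n)=\Lambda^r(V)\oplus\bigl(\Lambda^{r-1}(V)\wedge v_1\bigr)$. Using the derivation property of $\iota_\pi$ together with $\pi|_V=0$ and $\pi(v_1)=1$, a direct computation shows that $\iota_\pi$ annihilates $\Lambda^r(V)$ and sends $\omega'\wedge v_1 \mapsto (-1)^{r-1}\omega'$ for $\omega'\in\Lambda^{r-1}(V)$. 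This pins down the kernel and image, producing the desired short exact sequence of $\mathbb{C}\mathfrak{T}_n$-modules.

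For non-splitness when $1\leq r<n$: both $\Lambda^r(V)$ and $\Lambda^{r-1}(V)$ are nonzero in this range (since $\dim V=n-1$, so $0\leq r-1$ and $r\leq n-1$), so a splitting would exhibit $\Lambda^r(\mathbb{C}^n)$ as a nontrivial direct sum, contradicting the indecomposability established in Theorem~\ref{t:ext.is.proj}. For the concluding assertion, when $r=n$ we have $\Lambda^n(V)=0$, so the sequence degenerates to $\Lambda^n(\mathbb{C}^n)\cong\Lambda^{n-1}(V)$; the right-hand side is identified with the simple module $L((1^n))$ by Theorem~\ref{t:alt.rep.ft}, and projectivity is part of Theorem~\ref{t:ext.is.proj}. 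The main technical point is the subtle interplay between the non-equivariant vector-space splitting used in the computation and the equivariance of the resulting module map $\iota_\pi$; one must be careful to distinguish the (not necessarily module) decomposition of $\Lambda^r(\mathbb{C}^n)$ used to compute $\ker\iota_\pi$ and $\mathrm{im}\,\iota_\pi$ from any putative splitting of the short exact sequence itself.
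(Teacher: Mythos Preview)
Your argument is correct. The paper takes a more hands-on route: it fixes the basis $w_i=v_i-v_n$ of $V$ together with $w_n=v_1+\cdots+v_n$, defines the surjection $\rho$ explicitly on wedge monomials in the $w_i$, and then verifies $\mathbb C\mathfrak T_n$-equivariance by the direct computation $gw_n=w_n+v_g$ with $v_g\in V$. Your contraction map $\iota_\pi$ is, up to the harmless scalar $(-1)^{r-1}n$ when one uses $w_n$ instead of $v_1$ as the complement, the same linear map as the paper's $\rho$; what is genuinely different is that you obtain equivariance for free from the equivariance of $\pi$ (a one-line check, since the target is trivial), and then isolate the linear-algebra content of the kernel/image computation in a non-equivariant splitting. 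The paper's approach has the virtue of being completely explicit in coordinates, which feeds directly into the description of the boundary maps in Corollary~\ref{c:minimal.res}; your approach has the virtue of making clear that the sequence is the two-step Koszul filtration on $\Lambda^r$ associated to a short exact sequence with one-dimensional quotient, so it would generalize immediately to any monoid action on a module with a one-dimensional trivial quotient.
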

\begin{proof}
Clearly, $\Lambda^r(V)$ is a submodule $\Lambda^r(\mathbb C^n)$.  Let $v_1,\ldots, v_n$ be the standard basis for $\mathbb C^n$ and  put $w_i = v_i-v_n$  for $i=1,\ldots, n-1$.  Then $w_1,\ldots, w_{n-1}$ is a basis for $V$.  We claim that $\Lambda^r(\mathbb C^n)/\Lambda^r(V)\cong \Lambda^{r-1}(V)$.

Put $w_n=v_1+\cdots+v_n$.  Then $w_1,\ldots, w_n$ is a basis for $\mathbb C^n$.  Define $\rho\colon \Lambda^r(\mathbb C^n)\to \Lambda^{r-1}(V)$ on the basis of $r$-fold wedge products of $w_1,\ldots, w_n$ by
\[\rho(w_{i_1}\wedge\cdots \wedge w_{i_r})=\begin{cases} w_{i_1}\wedge \cdots \wedge w_{i_{r-1}}, & \text{if}\ i_r=n\\ 0, & \text{else}\end{cases}\]
for $1\leq i_1<\cdots<i_r\leq n$.  This is clearly a surjective linear map with kernel $\Lambda^r(V)$.  Let us check that it is a $\mathbb C\mathfrak T_n$-module homomorphism.
If $g\in \mathfrak T_n$, then since $\mathbb C^n/V$ is the trivial module, it follows that $gw_n +V= w_n + V$ and so $gw_n=w_n+v_g$ with $v_g\in V$.  Therefore, if
$1\leq i_1<\cdots<i_{r-1}\leq n-1$, then
\begin{align*}
g(w_{i_1}\wedge\cdots \wedge w_{i_{r-1}}\wedge w_n) &= gw_{i_1}\wedge\cdots \wedge gw_{i_{r-1}}\wedge w_n+{}\\ &\qquad\quad gw_{i_1}\wedge\cdots \wedge gw_{i_{r-1}} \wedge v_g\\ &\in gw_{i_1}\wedge\cdots \wedge gw_{i_{r-1}}\wedge w_n+\Lambda^r(V).
\end{align*}
We conclude that \[g\rho(w_{i_1}\wedge\cdots \wedge w_{i_{r-1}}\wedge w_n)=gw_{i_1}\wedge\cdots \wedge gw_{i_{r-1}}=\rho(g(w_{i_1}\wedge\cdots \wedge w_{i_{r-1}}\wedge w_n)).\]
This completes the proof that there is such an exact sequence.  If $1\leq r<n$, then it cannot split because $\Lambda^r(\mathbb C^n)$ is indecomposable by Theorem~\ref{t:ext.is.proj}.
\end{proof}

Let us deduce the following important corollary.

\begin{cor}\label{c:proj.indec}
For $1\leq r\leq n$, the exterior power $\Lambda^r(\mathbb C^n)$ is a projective indecomposable module with $\rad{\Lambda^r(\mathbb C^n)}=\Lambda^r(\Aug{\mathbb C^n})$ and simple top $\Lambda^{r-1}(\Aug{\mathbb C^n})$.
\end{cor}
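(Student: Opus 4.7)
The plan is to assemble Theorems~\ref{t:ext.is.proj}, \ref{t:ext.power.seq} and~\ref{t:alt.rep.ft} into a very short argument, using only the standard fact that a surjection from a projective indecomposable module onto a simple module identifies the simple module with the top and the kernel with the radical.

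First I would note that projective indecomposability of $\Lambda^r(\mathbb C^n)$ is already supplied by Theorem~\ref{t:ext.is.proj}, so the only remaining content is the identification of the radical and the top. Next I would invoke Theorem~\ref{t:alt.rep.ft} (applied with $r$ replaced by $r$, so that $\Lambda^{r-1}(V) = L((1^r))$) to conclude that the right-hand term of the short exact sequence
\[
0\to \Lambda^r(V)\to \Lambda^r(\mathbb C^n)\to \Lambda^{r-1}(V)\to 0
\]
of Theorem~\ref{t:ext.power.seq} is a simple $\mathbb C\mathfrak T_n$-module.

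The key step is then the standard observation: if $P$ is a projective indecomposable module over a finite-dimensional algebra and $P\twoheadrightarrow S$ is a surjection onto a simple module $S$, then the kernel is a maximal submodule and hence contains $\rad P$; the induced surjection $\top(P)\twoheadrightarrow S$ must be an isomorphism because $\top(P)$ is simple, so in fact the kernel equals $\rad P$ and $S\cong \top(P)$. Applied to $P=\Lambda^r(\mathbb C^n)$ and the surjection in Theorem~\ref{t:ext.power.seq}, this yields $\rad{\Lambda^r(\mathbb C^n)}=\Lambda^r(V)$ and $\top(\Lambda^r(\mathbb C^n))\cong \Lambda^{r-1}(V)$.

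Finally I would address the boundary case $r=n$: here $\Lambda^r(V)=0$ since $\dim V=n-1$, so the sequence degenerates to $\Lambda^n(\mathbb C^n)\cong \Lambda^{n-1}(V)=L((1^n))$, which is simple projective, consistent with $\rad=0$ and top equal to the whole module. There is essentially no obstacle here beyond assembling the earlier results; the only point that requires any care is ensuring that $\Lambda^{r-1}(V)$ is really simple, which is exactly the content of Theorem~\ref{t:alt.rep.ft} and justifies the application of the projective-cover argument.
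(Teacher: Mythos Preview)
Your proposal is correct and follows essentially the same approach as the paper's proof: both combine Theorem~\ref{t:ext.is.proj} (projective indecomposability), Theorem~\ref{t:alt.rep.ft} (simplicity of the exterior powers of $V$), and the short exact sequence of Theorem~\ref{t:ext.power.seq}, and then read off the radical and top from the standard fact about projective covers. Your explicit handling of the boundary case $r=n$ is a nice touch; the paper's proof leaves this implicit.
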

\begin{proof}
Theorem~\ref{t:ext.is.proj} yields that $P=\Lambda^r(\mathbb C^n)$ is a projective indecomposable module and hence has a simple top.  As the modules $\Lambda^r(\Aug{\mathbb C^n})$ and $\Lambda^{r-1}(\Aug{\mathbb C^n})$ are simple by Theorem~\ref{t:alt.rep.ft}, Theorem~\ref{t:ext.power.seq} yields that $\rad{P}=\Lambda^r(\Aug{\mathbb C^n})$ and $P/\rad{P}\cong \Lambda^{r-1}(\Aug{\mathbb C^n})$.
\end{proof}

Theorem~\ref{t:ext.power.seq} allows us to construct the minimal projective resolution of $\Lambda^{r-1}(\Aug{\mathbb C^n})$ for $1\leq r\leq n$.

\begin{cor}\label{c:minimal.res}
Let $v_1,\ldots, v_n$ be the standard basis for $\mathbb C^n$, let $w_i=v_i-v_n$ for $1\leq i\leq n-1$ and let $w_n=v_1+\cdots+v_n$.  Let $V=\Aug{\mathbb C^n}$. Then, for $1\leq r\leq n$, the minimal projective resolution of the simple module $\Lambda^{r-1}(V)$ is
\[\begin{tikzcd}0\ar{r} & P_{n-r}\ar{r}{d_{n-r}}& \cdots\ar{r}{d_{1}} &P_0\ar{r}{d_0} & \Lambda^{r-1}(V)\ar{r} & 0\end{tikzcd}\] where $P_q=\Lambda^{q+r}(\mathbb C^n)$ and
\[d_{q}(w_{i_1}\wedge\cdots \wedge w_{i_{q+r}})=\begin{cases} w_{i_1}\wedge \cdots \wedge w_{i_{q+r-1}}, & \text{if}\ i_{q+r}=n\\ 0, & \text{else.}\end{cases}\]

Therefore, $\mathop{\mathrm{pd}} \Lambda^{r-1}(V)=n-r$ for $1\leq r\leq n$.  In particular, the projective dimension of the trivial $\mathbb C\mathfrak T_n$-module is $n-1$.
\end{cor}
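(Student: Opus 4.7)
My plan is to splice together the short exact sequences from Theorem~\ref{t:ext.power.seq} into a single long exact sequence and then verify minimality. For each $s$ with $1 \le s \le n$, that theorem gives
\[0 \to \Lambda^s(V) \to \Lambda^s(\mathbb C^n) \xrightarrow{\rho_s} \Lambda^{s-1}(V) \to 0,\]
where $\rho_s$ drops the last wedge factor when it equals $w_n$ and sends a basis wedge to zero otherwise. Beginning with $s=r$ and iteratively splicing with the sequences for $s=r+1,\ldots,n$ yields
\[0 \to \Lambda^n(V) \to \Lambda^n(\mathbb C^n) \to \cdots \to \Lambda^r(\mathbb C^n) \to \Lambda^{r-1}(V) \to 0.\]
Since $\dim V = n-1$ forces $\Lambda^n(V)=0$, this collapses to a complex of length $n-r$ with $P_q=\Lambda^{q+r}(\mathbb C^n)$; for $q\geq 1$ the map $d_q$ is the composition of $\rho_{q+r}$ with the inclusion $\Lambda^{q+r-1}(V)\hookrightarrow \Lambda^{q+r-1}(\mathbb C^n)$, which on basis wedges is precisely the formula in the statement, and $d_0=\rho_r$ likewise matches.

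Each $P_q$ is projective by Theorem~\ref{t:ext.is.proj}, so what remains is to verify minimality. By Proposition~\ref{minresolution}, or directly from the definition, this reduces to showing that each $d_q$, viewed as a surjection onto its image, is a projective cover. By construction the image of $d_q$ is $\Lambda^{q+r-1}(V)$ (either the kernel of $d_{q-1}$ for $q\geq 1$, or the target for $q=0$), and this module is a simple $\mathbb C\mathfrak T_n$-module by Theorem~\ref{t:alt.rep.ft}. Corollary~\ref{c:proj.indec} identifies $P_q=\Lambda^{q+r}(\mathbb C^n)$ as an indecomposable projective with simple top $\Lambda^{q+r-1}(V)$, so $P_q$ is the projective cover of that simple module and $d_q$ is the corresponding covering map. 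Minimality follows.

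The projective-dimension statement is then immediate: the minimal resolution has length $n-r$, so $\mathop{\mathrm{pd}} \Lambda^{r-1}(V)=n-r$. Specialising to $r=1$ identifies $\Lambda^0(V)=\mathbb C$ with the trivial $\mathbb C\mathfrak T_n$-module, which therefore has projective dimension $n-1$. I do not anticipate a serious obstacle here: the substantive content has already been absorbed into Theorems~\ref{t:ext.is.proj} and~\ref{t:ext.power.seq} and Corollary~\ref{c:proj.indec}, and the only step requiring genuine care is matching the spliced differential against the explicit basis-level formula, which is direct from the definition of $\rho$.
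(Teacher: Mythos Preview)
Your argument is correct and matches the paper's own proof essentially verbatim: the paper likewise obtains exactness by repeated application of Theorem~\ref{t:ext.power.seq} (and its proof) and then invokes Corollary~\ref{c:proj.indec} to see that each $d_q$ is a projective cover onto the simple module $\Lambda^{q+r-1}(V)$, giving minimality. Your write-up is simply a more explicit rendering of the same splicing-and-cover argument.
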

\begin{proof}
The exactness of the resolution follows from repeated application of Theorem~\ref{t:ext.power.seq} (and its proof).  Corollary~\ref{c:proj.indec} implies that each mapping \[d_q\colon \Lambda^{q+r}(\mathbb C^n)\to d_q(\Lambda^{q+r}(\mathbb C^n))=\Lambda^{q+r-1}(V)\] is a projective cover and so the resolution is a minimal projective resolution.
The final statement holds because $\Lambda^0(V)$ is the trivial $\mathbb C\mathfrak T_n$-module.
\end{proof}

It follows from Corollary~\ref{c:minimal.res} that the global dimension of $\mathbb C\mathfrak T_n$ is at least $n-1$. The next subsection will show that this lower bound is tight. In fact, Corollary~\ref{c:minimal.res} yields that the cohomological dimension of $\mathfrak T_n$ over $\mathbb C$ is $n-1$ (the \emph{cohomological dimension} of a monoid $M$ over a base ring $R$ is the projective dimension of the trivial $RM$-module).

\subsection{A computation of the global dimension}

We first compute $\Ext$ from an exterior power of the augmentation submodule of $\mathbb C^n$.

\begin{proposition}\label{p:ext.tn.lower}
For $1\leq k,r\leq n$ and $\lambda$ a partition of $k$, one has that
\[\Ext^m_{\mathbb C\mathfrak T_n}(L((1^r)),L(\lambda))\cong \begin{cases}\mathbb C, & \text{if}\ \lambda=(1^{r+m})\\ 0, & \text{else.}\end{cases}\]
\end{proposition}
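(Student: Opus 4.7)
The plan is to read the result directly off the minimal projective resolution of $L((1^r))$ constructed in Corollary~\ref{c:minimal.res}. By Proposition~\ref{minresolution}, if $P_\bullet\to L((1^r))$ is the minimal projective resolution, then for any simple module $T$ we have
\[\Ext^m_{\mathbb C\mathfrak T_n}(L((1^r)),T)\cong \Hom_{\mathbb C\mathfrak T_n}(P_m,T).\]
So the whole statement reduces to identifying the simple top of each $P_m$.

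First I would invoke Corollary~\ref{c:minimal.res} with $V=\Aug{\mathbb C^n}$, which says that for $0\leq m\leq n-r$ the $m$-th term of the minimal projective resolution of $L((1^r))=\Lambda^{r-1}(V)$ is $P_m=\Lambda^{m+r}(\mathbb C^n)$, and that $P_m=0$ for $m>n-r$. Next, by Corollary~\ref{c:proj.indec}, each $\Lambda^{m+r}(\mathbb C^n)$ (for $1\leq m+r\leq n$) is a projective indecomposable module whose simple top is $\Lambda^{m+r-1}(V)=L((1^{m+r}))$. Therefore
\[\Hom_{\mathbb C\mathfrak T_n}(P_m,L(\lambda))\cong \Hom_{\mathbb C\mathfrak T_n}(L((1^{m+r})),L(\lambda))\]
which, by Schur's lemma applied to the simple $\mathbb C\mathfrak T_n$-modules, is $\mathbb C$ when $\lambda=(1^{m+r})$ and zero otherwise.

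It only remains to check the edge cases. If $m+r>n$, then $P_m=0$, so the $\Ext$ vanishes; on the right-hand side, $(1^{m+r})$ would require $k=m+r>n$, which is excluded by the hypothesis $k\leq n$, so the right-hand side also vanishes. The case $m=0$ recovers the well-known fact $\Hom_{\mathbb C\mathfrak T_n}(L((1^r)),L(\lambda))=\mathbb C\cdot\delta_{\lambda,(1^r)}$, which matches the formula.

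The computation is therefore essentially bookkeeping once the minimal projective resolution and the identification of the tops from Corollary~\ref{c:proj.indec} are in hand; no further obstacle is anticipated beyond being careful with the boundary in $m$ and $r$.
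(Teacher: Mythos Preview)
Your proposal is correct and follows essentially the same approach as the paper: both use the minimal projective resolution from Corollary~\ref{c:minimal.res} together with Proposition~\ref{minresolution} to reduce to computing $\Hom_{\mathbb C\mathfrak T_n}(\Lambda^{r+m}(\mathbb C^n),L(\lambda))$, and then invoke Corollary~\ref{c:proj.indec} to identify the simple top of $\Lambda^{r+m}(\mathbb C^n)$ as $L((1^{r+m}))$. Your treatment of the edge case $m+r>n$ is slightly more explicit than the paper's, but otherwise the arguments are the same.
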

\begin{proof}
Let $V=\Aug{\mathbb C^n}$.  Recall that $L((1^r))\cong \Lambda^{r-1}(V)$ by Theorem~\ref{t:alt.rep.ft}. Using the minimal projective resolution for $\Lambda^{r-1}(V)$ from Corollary~\ref{c:minimal.res} and Proposition~\ref{minresolution}, we deduce that
\begin{align*}
\Ext^m_{\mathbb C\mathfrak T_n}(\Lambda^{r-1}(V),L(\lambda))&\cong \Hom_{\mathbb C\mathfrak T_n}(\Lambda^{r+m}(\mathbb C^n),L(\lambda))\\ &\cong \Hom_{\mathbb C\mathfrak T_n}(\Lambda^{r+m-1}(V),L(\lambda))
\end{align*}
where the last isomorphism uses that $\Lambda^{r+m}(\mathbb C^n)$ has simple top $\Lambda^{r+m-1}(V)$ by Corollary~\ref{c:proj.indec}.  Recalling that $\Lambda^{r+m-1}(V)\cong L((1^{r+m}))$ by Theorem~\ref{t:alt.rep.ft}, the result follows.
\end{proof}

We next prove a vanishing result when the first variable is not one of the exterior powers of the augmentation submodule of $\mathbb C^n$ (and hence is an induced module).

\begin{proposition}\label{p:other.mods.tn}
Let $n\geq 1$ and $1\leq k,r\leq n$.  Let $S(\lambda)$ be a simple $\mathbb C\mathfrak S_r$-module with $\lambda\neq (1^r)$ and let $S(\mu)$ be a simple $\mathbb C\mathfrak S_k$-module.  Then $\Ext^m_{\mathbb C\mathfrak T_n}(L(\lambda),L(\mu))=0$  unless $0\leq m\leq r-k\leq n-1$.
\end{proposition}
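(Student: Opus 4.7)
The inequality $r-k\leq n-1$ is automatic from $r\leq n$ and $k\geq 1$, so the substance of the claim is that $\Ext^m_{\mathbb C\mathfrak T_n}(L(\lambda),L(\mu))=0$ whenever $m>r-k$. My plan is to split on the sign of $r-k$. First, when $k\geq r$, the module $L(\mu)$ is annihilated by every element of $\mathfrak T_n$ of rank below $k$, and hence below $r$, so $L(\mu)$ factors through $\mathbb C\mathfrak T_n/\mathbb C I$ for $I=\mathfrak T_ne_r\mathfrak T_n\setminus J_{e_r}$. Since $\lambda\neq(1^r)$, Theorem~\ref{t:full.trans.char}(i) identifies $L(\lambda)$ with $\Ind_{G_{e_r}}(S(\lambda))$, and Lemma~\ref{l:eck.shap} gives
\[
\Ext^m_{\mathbb C\mathfrak T_n}(L(\lambda),L(\mu))\cong \Ext^m_{\mathbb C\mathfrak S_r}(S(\lambda),\Res_{G_{e_r}}L(\mu)),
\]
which vanishes for $m\geq 1$ by semisimplicity of $\mathbb C\mathfrak S_r$. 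For $m=0$ and $k>r$ it vanishes too, since $e_r\in I(e_k)$ forces $\Res_{G_{e_r}}L(\mu)=e_rL(\mu)=0$. This handles every $m>r-k$ in the regime $k\geq r$.

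When $k<r$, I would induct upward on $r-k\geq 1$, using the previous case as the base. Form the short exact sequence
\[
0\to L(\mu)\to \Coind_{G_{e_k}}(S(\mu))\to Q\to 0
\]
whose existence comes from the identification $L(\mu)\cong \soc{\Coind_{G_{e_k}}(S(\mu))}$ in Theorem~\ref{t:clifford.munn.poni}(i), and apply $\Hom_{\mathbb C\mathfrak T_n}(L(\lambda),-)$. Since $L(\lambda)$ has apex $e_r$ with $r>k$, it factors through $\mathbb C\mathfrak T_n/\mathbb C(\mathfrak T_ne_k\mathfrak T_n\setminus J_{e_k})$, so Corollary~\ref{c:kill.above} forces $\Ext^m_{\mathbb C\mathfrak T_n}(L(\lambda),\Coind_{G_{e_k}}(S(\mu)))=0$ for every $m\geq 1$; the long exact sequence then supplies $\Ext^m(L(\lambda),L(\mu))\cong \Ext^{m-1}(L(\lambda),Q)$ for every $m\geq 2$. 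By Theorem~\ref{t:clifford.munn.poni}(iii), each composition factor $L(\mu'')$ of $Q$ has apex $e_{k''}$ with $k''\geq k+1$, so $r-k''\leq r-k-1$; hence whenever $m>r-k$ one has $m-1>r-k''$, and the inductive hypothesis (or Case~1 when $k''\geq r$) forces $\Ext^{m-1}(L(\lambda),L(\mu''))=0$. Lemma~\ref{l:comp.fact} then gives $\Ext^{m-1}(L(\lambda),Q)=0$, closing the induction. At $m=1$ the inequality $m>r-k$ fails when $k<r$, so nothing needs to be proved in that degree.

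The main obstacle is precisely the regime $k<r$: there $L(\mu)$ does not factor through the quotient of $\mathbb C\mathfrak T_n$ that makes the Eckmann--Shapiro lemma directly applicable. My plan is to circumvent this by replacing $L(\mu)$ with its costandard envelope $\Coind_{G_{e_k}}(S(\mu))$, which \emph{does} satisfy the hypotheses of Corollary~\ref{c:kill.above} against $L(\lambda)$, and then to exploit the fact from Theorem~\ref{t:clifford.munn.poni}(iii) that every additional composition factor of the costandard module lives strictly above $e_k$ in the $\mathscr J$-order, allowing me to bootstrap from the base case.
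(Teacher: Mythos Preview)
Your proof is correct and follows essentially the same approach as the paper's: split into the cases $k\geq r$ and $k<r$, use that $L(\lambda)=\Ind_{G_{e_r}}(S(\lambda))$ together with the Eckmann--Shapiro type vanishing (Lemma~\ref{l:eck.shap}/Corollary~\ref{c:kill.above}) for the first case, and for the second embed $L(\mu)$ in $\Coind_{G_{e_k}}(S(\mu))$, invoke Corollary~\ref{c:kill.above} to kill the middle term, and induct on $r-k$ via Theorem~\ref{t:clifford.munn.poni}(iii) and Lemma~\ref{l:comp.fact}. The only cosmetic difference is that the paper cites Corollary~\ref{c:kill.above} directly in Case~1 rather than Lemma~\ref{l:eck.shap} plus semisimplicity of $\mathbb C\mathfrak S_r$.
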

\begin{proof}
Suppose first that $k\geq r$.  Let $I_{r-1}\subseteq \mathfrak T_n$ be the ideal of mappings of rank at most $r-1$, where we take $I_0=\emptyset$, and put $A=\mathbb C\mathfrak T_n/\mathbb CI_{r-1}$.   Since  $L(\lambda)=\Ind_{G_{e_r}}(S(\lambda))$ by Theorem~\ref{t:full.trans.char} and $L(\mu)$ is an $A$-module, we conclude that $\Ext^m_{\mathbb C\mathfrak T_n}(L(\lambda),L(\mu))=0$ for all $m\geq 1$ by Corollary~\ref{c:kill.above}.  Since there are no homomorphisms from $L(\lambda)$ to $L(\mu)$ if $k>r$, this completes the first case.

Next suppose that $k\leq r$.  We proceed by induction on $r-k$ where the base case $r=k$ has already been handled.
Assume that $k<r$ and that the result is true for $k'$ with $k<k'\leq n$. Let $A=\mathbb C\mathfrak T_n/\mathbb CI_{k-1}$ and consider the exact sequence of $A$-modules
\[\begin{tikzcd}0\ar{r} & L(\mu)\ar{r}& \Coind_{G_{e_k}}(S(\mu))\ar{r} & \Coind_{G_{e_k}}(S(\mu))/L(\mu)\ar{r} &0 .\end{tikzcd}\] Since $L(\lambda)$ is an $A$-module, Corollary~\ref{c:kill.above} and the long exact sequence for $\mathrm{Ext}$ imply that, for $m>r-k\geq 1$,
\begin{equation*}
\Ext_{\mathbb C\mathfrak T_n}^{m}(L(\lambda),L(\mu))\cong  \Ext_{\mathbb C\mathfrak T_n}^{m-1}(L(\lambda), \Coind_{G_{e_k}}(S(\mu))/L(\mu))=0
\end{equation*}
 where the last equality uses that each composition factor of the module $\Coind_{G_{e_k}}(S(\mu))/L(\mu)$ has apex $e_{k'}$ with $k'>k$ by Theorem~\ref{t:clifford.munn.poni}, induction and Lemma~\ref{l:comp.fact}.  This completes the proof.
\end{proof}

We are now prepared to prove the main result of the paper.

\begin{theorem}\label{t:gl.dim.tn}
The global dimension of  $\mathbb C\mathfrak T_n$ is $n-1$ for all $n\geq 1$.
\end{theorem}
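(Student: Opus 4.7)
The plan is to combine the classical characterization
\[
\gldim A = \sup\{\, m \geq 0 \mid \Ext^m_A(S,S') \neq 0 \text{ for some simple $A$-modules } S,S'\,\}
\]
with the $\Ext$-vanishing statements already established in Propositions~\ref{p:ext.tn.lower} and~\ref{p:other.mods.tn}. The lower bound is essentially free: Corollary~\ref{c:minimal.res} exhibits a minimal projective resolution of the trivial $\mathbb C\mathfrak T_n$-module $L((1))=\Lambda^0(\Aug{\mathbb C^n})$ of length exactly $n-1$, so $\gldim \mathbb C\mathfrak T_n\geq n-1$. Equivalently, Proposition~\ref{p:ext.tn.lower} with $r=1$, $m=n-1$, $\lambda=(1^n)$ yields $\Ext^{n-1}_{\mathbb C\mathfrak T_n}(L((1)),L((1^n)))\cong \mathbb C\neq 0$.

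For the matching upper bound I would verify that $\Ext^m_{\mathbb C\mathfrak T_n}(L(\lambda),L(\mu))=0$ for every $m\geq n$ and every pair of simple modules, parametrized by partitions $\lambda\vdash r$ and $\mu\vdash k$ with $1\leq r,k\leq n$. A case split on the shape of $\lambda$ handles both possibilities directly. If $\lambda=(1^r)$ is a single column, Proposition~\ref{p:ext.tn.lower} says that $\Ext^m_{\mathbb C\mathfrak T_n}(L((1^r)),L(\mu))$ vanishes unless $\mu=(1^{r+m})$; in that exceptional case one needs $r+m\leq n$, whence $m\leq n-r\leq n-1<n$. If instead $\lambda\neq (1^r)$, Proposition~\ref{p:other.mods.tn} already bounds $m\leq r-k\leq n-1<n$. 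Either way $\Ext^m_{\mathbb C\mathfrak T_n}(L(\lambda),L(\mu))=0$ for $m\geq n$, so $\gldim \mathbb C\mathfrak T_n\leq n-1$.

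Combining both bounds gives $\gldim \mathbb C\mathfrak T_n=n-1$. There is no real obstacle here: the theorem is a corollary of the two preceding propositions together with Corollary~\ref{c:minimal.res}, since all the genuine work (the construction of the minimal resolution via exterior powers, and the two $\Ext$-vanishing statements, one relying on the Eckmann-Shapiro-type Corollary~\ref{c:kill.above} and the other on induction down the $\mathscr J$-order via the costandard modules) has already been done. The only subtlety to be mindful of while writing is to make sure the parametrization of simples by partitions used in Corollary~\ref{c:param} is invoked so as to cover every simple module exactly once in the two cases.
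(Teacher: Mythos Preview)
Your proposal is correct and follows essentially the same approach as the paper's own proof: establish the lower bound via Proposition~\ref{p:ext.tn.lower} (or equivalently Corollary~\ref{c:minimal.res}) and the upper bound by the case split on whether $\lambda$ is a column partition, invoking Proposition~\ref{p:ext.tn.lower} and Proposition~\ref{p:other.mods.tn} respectively. The paper's proof is slightly more terse, but the logical structure is identical.
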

\begin{proof}
By Proposition~\ref{p:ext.tn.lower}, we have that  $\Ext^{n-1}_{\mathbb C\mathfrak T_n}(L((1)),L((1^n)))\cong \mathbb C$.  On the other hand, Proposition~\ref{p:ext.tn.lower} and Proposition~\ref{p:other.mods.tn} yield that \[\Ext^m_{\mathbb C\mathfrak T_n}(L(\lambda),L(\mu))=0\] for all simple modules $L(\lambda),L(\mu)$ and $m\geq n$.  This completes the proof that $\gldim \mathbb C\mathfrak T_n=n-1$.
\end{proof}

We remark that since $L((1))$ is a simple injective module (because it is isomorphic to $\Hom_{\mathbb C}(e_1\mathbb C\mathfrak T_n,\mathbb C)$ where $e_1$ is the constant mapping to $1$) it follows that if $\lambda\neq (1)$, then $\Ext^{n-1}_{\mathbb C\mathfrak T_n}(L(\lambda),L(\mu))=0$ for all partitions $\mu$ and hence no simple module other than the trivial module has projective dimension $n-1$.

In fact, since $\mathbb Q$ is a splitting field for all symmetric groups, and hence for all full transformation monoids, the above argument works \textit{mutatis mutandis} to prove that $\Bbbk \mathfrak T_n$ has global dimension $n-1$ for all fields $\Bbbk$ of characteristic $0$.

Recall that if $A$ is a finite dimensional algebra over an algebraically closed field, then the \emph{quiver} of $A$ is the directed graph with vertices the isomorphism classes of simple $A$-modules and edges as follows. If $S_1$ and $S_2$ are simple $A$-modules, the number of directed edges from the isomorphism class of $S_1$ to the isomorphism class of $S_2$ is $\dim \Ext^1_A(S_1,S_2)$.  See~\cite{AuslanderReiten,benson,assem} for details.

\begin{corollary}
The quiver of $\mathbb C\mathfrak T_n$ is acyclic for all $n\geq 1$.
\end{corollary}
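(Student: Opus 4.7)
My plan is to exhibit a strict height function on the isomorphism classes of simple $\mathbb{C}\mathfrak{T}_n$-modules along which every quiver arrow strictly decreases; a directed cycle would then produce a strictly decreasing loop, which is impossible. The vertices of the quiver are the $L(\lambda)$ for $\lambda$ a partition of some $r$ with $1\le r\le n$, and the edges from $L(\lambda)$ to $L(\mu)$ are counted by $\dim\Ext^1_{\mathbb{C}\mathfrak{T}_n}(L(\lambda),L(\mu))$, so I only need to know which of these $\Ext^1$ spaces can be non-zero.

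This is exactly the content of Propositions~\ref{p:ext.tn.lower} and~\ref{p:other.mods.tn} at $m=1$. From Proposition~\ref{p:ext.tn.lower}, if $\lambda=(1^r)$ then $\Ext^1(L(\lambda),L(\mu))\ne 0$ forces $\mu=(1^{r+1})$. From Proposition~\ref{p:other.mods.tn}, if $\lambda\vdash r$ with $\lambda\ne(1^r)$ and $\mu\vdash k$, then $\Ext^1(L(\lambda),L(\mu))\ne 0$ forces $1\le r-k$, i.e.\ $k<r$. Thus the sign simples form a single outgoing chain $L((1))\to L((1^2))\to\cdots\to L((1^n))$ that always increases the exponent, while every non-sign simple of rank $r$ points only to simples (of either type) of strictly smaller rank.

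To package this I will set
\[ f(L((1^r)))=-r,\qquad f(L(\lambda))=r\ \text{for }\lambda\vdash r,\ \lambda\ne(1^r), \]
and check that $f$ strictly decreases along every arrow. For a sign-to-sign arrow $L((1^r))\to L((1^{r+1}))$, the value drops from $-r$ to $-(r+1)$. For an arrow $L(\lambda)\to L(\mu)$ with $\lambda\vdash r$, $\lambda\ne(1^r)$, and $\mu\vdash k<r$, we have $f(L(\lambda))=r$, while $f(L(\mu))$ equals $k$ or $-k$; both are strictly less than $r$, since $k<r$ and, as $k\ge 1$, $-k\le -1<1\le r$. Hence $f$ is a strict height function on the quiver and the quiver has no directed cycles.

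The heavy lifting has already been done by the two $\Ext$-vanishing propositions, so I do not expect a real obstacle here. The only structural point worth highlighting is the asymmetry between the two families of simples (sign reps go up in $r$ along arrows, non-sign reps go down), which is what forces the opposite signs in the two cases of the definition of $f$.
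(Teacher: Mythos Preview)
Your proof is correct and follows essentially the same approach as the paper: both use Propositions~\ref{p:ext.tn.lower} and~\ref{p:other.mods.tn} at $m=1$ to show that arrows out of $L((1^r))$ go only to $L((1^{r+1}))$ while arrows out of a non-sign simple of rank $r$ go only to simples of strictly smaller rank. The paper then simply asserts that acyclicity ``follows immediately'' from this dichotomy, whereas you make this explicit by constructing a strict height function; this is a perfectly reasonable way to formalize the same observation.
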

\begin{proof}
Proposition~\ref{p:ext.tn.lower} implies that the only arrow exiting $L(1^r))$ is the arrow $L((1^r))\to L((1^{r+1}))$ for $1\leq r\leq n-1$ and that $L((1^n))$ is a sink.  Proposition~\ref{p:other.mods.tn} implies that all other arrows go from a module with apex $e_k$ to a module with apex $e_{j}$ with $j<k$.  It follows immediately that the quiver of $\mathbb C\mathfrak T_n$ is acyclic.
\end{proof}

It is an open question to compute the quiver of $\mathbb C\mathfrak T_n$ for $n\geq 5$.  The quiver for $1\leq n\leq 4$ can be found in~\cite{Putcharep3}.

\appendix
\renewcommand{\thesection}{\Alph{section}}
\section{Tilting modules and the Ringel dual for the full transformation monoid \\ by
Volodymyr Mazorchuk and Benjamin Steinberg}
\subsection{Quasi-hereditary algebras and Ringel dual}\label{s2}

Let $\Bbbk$ be a field, $A$ a finite dimensional $\Bbbk$-algebra and $\{L(\lambda)\mid \lambda\in\Lambda\}$
a fixed set of representatives of isomorphism classes of simple $A$-modules, where $\Lambda$
is a fixed finite index set. For $\lambda\in\Lambda$, we fix an indecomposable projective cover
$P(\lambda)$ of $L(\lambda)$ and an indecomposable injective envelope $I(\lambda)$ of $L(\lambda)$.

Fix a partial order $<$ on $\Lambda$ and let $\leq$ be the union of $<$ with the equality relation.
Let $\Delta(\lambda)$, where $\lambda\in\Lambda$, denote the quotient of $P(\lambda)$ by the submodule
generated by the images of all possible homomorphisms $P(\mu)\to P(\lambda)$, where $\mu\not\leq\lambda$.
The modules $\Delta(\lambda)$, where $\lambda\in\Lambda$, are called \emph{standard modules}.
Let $\nabla(\lambda)$, where $\lambda\in\Lambda$, denote the submodule of $I(\lambda)$ defined as
the intersection of the kernels of all possible homomorphisms $I(\lambda)\to I(\mu)$, where $\mu\not\leq\lambda$.
The modules $\nabla(\lambda)$, where $\lambda\in\Lambda$, are called \emph{costandard modules}.

The pair $(A,<)$ is called a \emph{quasi-hereditary algebra}, see~\cite{Scott,quasihered,dlabringelqh}, provided that
\begin{itemize}
\item the endomorphism algebra of each $\Delta(\lambda)$ is a division algebra;
\item each $P(\lambda)$ has a \emph{standard filtration}, that is, a filtration whose
subquotients are isomorphic to standard modules.
\end{itemize}
Equivalently, $(A,<)$ is quasi-hereditary  provided that
\begin{itemize}
\item the endomorphism algebra of each $\nabla(\lambda)$ is a division algebra;
\item each $I(\lambda)$ has a \emph{costandard filtration}, that is, a filtration
whose subquotients are isomorphic to costandard modules.
\end{itemize}
We refer the reader to the appendices in~\cite{drozd,Donkin} and to~\cite{KK} for more details.

Following~\cite{Ringeldual}, an $A$-module $T$ is called a \emph{tilting module} provided that
it has both a standard filtration and a costandard filtration. If $(A,<)$ is quasi-hereditary, then,
for any $\lambda\in\Lambda$, there is a unique (up to isomorphism) indecomposable tilting module
$T(\lambda)$ which contains $\Delta(\lambda)$ and such that the cokernel of the corresponding
inclusion $\Delta(\lambda)\hookrightarrow T(\lambda)$ has a standard filtration. Moreover,
every tilting module is isomorphic to a direct sum of (copies of) these $T(\lambda)$. The module
\begin{displaymath}
T=\bigoplus_{\lambda\in\Lambda}T(\lambda)
\end{displaymath}
is called the \emph{characteristic tilting module} for $A$. The algebra
$\mathrm{End}_{A}(T)^{\mathrm{op}}$ is quasi-hereditary and is called the \emph{Ringel dual} of $A$.
We refer to~\cite{Ringeldual,KK} for more details.

\subsection{Representation theory of the full transformation monoid}\label{s3}

\subsubsection{The full transformation monoid}\label{s3.1}

We fix a field $\Bbbk$ of characteristic $0$ and $n\geq 1$. We will continue to use the notation of the previous sections.
Simple $\Bbbk \mathfrak T_n$-modules are parameterized by partitions
$\lambda=(\lambda_1,\ldots, \lambda_s)$ of $r$ where $1\leq r\leq n$. We denote by $\Lambda$ the set of all such partitions.
We write $|\lambda|=\lambda_1+\cdots+\lambda_s$.  A special case of Putcha's results~\cite{Putcharep3} shows that $\Bbbk \mathfrak T_n$ is  quasi-hereditary with respect to the partial order that puts $\lambda <\rho$ if $|\lambda|>|\rho|$ for $\lambda,\rho\in \Lambda$. For each $\lambda\in\Lambda$,
we fix a corresponding simple $\mathfrak T_n$-module $L(\lambda)$. For each $\lambda\in\Lambda$, we denote by
$S(\lambda)$ the Specht $\Bbbk \mathfrak S_{|\lambda|}$-module  corresponding to $\lambda$.

\subsubsection{Standard and costandard $\Bbbk\mathfrak T_n$-modules}\label{s3.2}

The following descriptions of the standard and costandard $\Bbbk \mathfrak T_n$-modules
(with respect to this quasi-he\-red\-i\-tar\-y structure) are well known, see for example~\cite{Putcharep3}.  We continue to use the idempotents $e_1,\ldots, e_n$ from earlier.  The standard modules then turn out to be the induced modules and the costandard modules are the co-induced modules.
More precisely, for a partition $\lambda$ of $r$, we have that
\begin{align*}
\Delta(\lambda) &\cong \Ind_{G_{e_r}}(S(\lambda))\\
\nabla(\lambda) &\cong \Coind_{G_{e_r}}(S(\lambda)).
\end{align*}
Note that, if $\lambda$ is a partition of $n$, then we have $\Delta(\lambda)=L(\lambda)=\nabla(\lambda)$.

\subsubsection{(Co)standard versus simple $\Bbbk\mathfrak T_n$-modules}\label{s3.3}

We have the \emph{natural} $\Bbbk \mathfrak T_n$-module $N=\Bbbk^n$ in which
the module structure is defined by putting $fv_i=v_{f(i)}$, for $f\in \mathfrak T_n$, where $v_1,\ldots,v_n$
is the standard basis for $\Bbbk^n$.  Let $\mathrm{Aug}(N)$ be the
\emph{augmentation submodule} of $N$, it consists of all vectors whose coordinates sum to zero.

\begin{theorem}\label{t:Tn.rep}
Let $\lambda$ be a partition of $r$ with $1\leq r\leq n$.
\begin{enumerate}[$($i$)$]
\item\label{t:Tn.rep.1} If $\lambda\neq (1^r)$, then $\Delta(\lambda)=L(\lambda)$.
\item\label{t:Tn.rep.2} If $\lambda=(1^r)$, then $\displaystyle \Delta(\lambda) = P(\lambda) \cong \Lambda^r(N)$ and
$\displaystyle L(\lambda) = \Lambda^{r-1}(\mathrm{Aug}(N))$.
\item\label{t:Tn.rep.3} For each $1\leq r\leq n-1$, there is a short exact sequence
\begin{equation}\label{eq:pascal.identity}
0\longrightarrow L((1^{r+1}))\longrightarrow P((1^r))\longrightarrow L((1^r))\longrightarrow 0.
\end{equation}
\item\label{t:Tn.rep.4} We have $P((1^n))=L((1^n))$ is the one-dimensional sign representation of
$\mathfrak S_n$, extended to $\mathfrak T_n$ by sending all singular mappings to zero.
\end{enumerate}
\end{theorem}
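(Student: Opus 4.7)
The plan is to derive all four parts of Theorem~\ref{t:Tn.rep} as direct consequences of material already established, essentially translating the results of Section~3 into the quasi-hereditary language of Section~\ref{s3.2}. No new homological computation is needed; the task is bookkeeping.

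For \eqref{t:Tn.rep.1}, I would combine the description $\Delta(\lambda)\cong\Ind_{G_{e_r}}(S(\lambda))$ from Section~\ref{s3.2} with Theorem~\ref{t:full.trans.char}(i), which asserts that $\Ind_{\mathfrak S_r}(S(\lambda))$ is simple whenever $\lambda\neq(1^r)$. By Theorem~\ref{t:clifford.munn.poni}(iii), $L(\lambda)$ is the unique composition factor of $\Ind_{G_{e_r}}(S(\lambda))$ with apex $e_r$, so the simple induced module must coincide with $L(\lambda)$, giving $\Delta(\lambda)=L(\lambda)$.

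For \eqref{t:Tn.rep.2}, the identification $L((1^r))=\Lambda^{r-1}(\Aug{N})$ is exactly Theorem~\ref{t:alt.rep.ft}. To obtain $\Delta((1^r))=P((1^r))\cong\Lambda^r(N)$, I would reread the proof of Theorem~\ref{t:ext.is.proj}, which exhibits an explicit isomorphism
\[\Lambda^r(N)\;\cong\;\Bbbk\mathfrak T_n e_r\otimes_{\Bbbk G_{e_r}} S((1^r))\;=\;\Ind_{G_{e_r}}(S((1^r)))\;=\;\Delta((1^r)),\]
and which shows this module is projective and indecomposable. Its top is $L((1^r))$ by Corollary~\ref{c:proj.indec}, so $\Lambda^r(N)$ is the projective cover $P((1^r))$. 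For \eqref{t:Tn.rep.3}, the desired exact sequence \eqref{eq:pascal.identity} is literally the short exact sequence of Theorem~\ref{t:ext.power.seq}, rewritten using the identifications from \eqref{t:Tn.rep.2}: the left-hand term $\Lambda^r(\Aug{N})$ is $L((1^{r+1}))$, the middle is $P((1^r))$, and the right-hand quotient is $L((1^r))$.

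For \eqref{t:Tn.rep.4}, I would observe that part \eqref{t:Tn.rep.2} gives $P((1^n))\cong\Lambda^n(N)$, which is one-dimensional, so $P((1^n))$ is simple and therefore equals $L((1^n))$; alternatively this follows from the general fact (noted at the end of Section~\ref{s3.2}) that $\Delta(\lambda)=L(\lambda)=\nabla(\lambda)$ for partitions of $n$, combined with \eqref{t:Tn.rep.2}. The description as the sign representation extended by zero then comes from Theorem~\ref{t:alt.rep.ft}: the apex of $L((1^n))$ is $e_n=\mathrm{id}_{[n]}$, so by the definition of apex every $f\in I(e_n)=\mathfrak T_n\setminus\mathfrak S_n$ annihilates $L((1^n))$, while the action of $G_{e_n}\cong\mathfrak S_n$ is $S((1^n))$, the sign representation.

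The assembly is essentially routine, since the exterior-power analysis of Section~3 was designed precisely to produce these projective covers and standard modules. The only mild subtlety, and the point I would take the most care with, is in \eqref{t:Tn.rep.2}: one must recognize that the isomorphism $\Lambda^r(N)\cong\Ind_{G_{e_r}}(S((1^r)))$ appearing inside the proof of Theorem~\ref{t:ext.is.proj} is exactly what identifies $\Lambda^r(N)$ with the standard module $\Delta((1^r))$, and not merely with some projective indecomposable which happens to have the right top.
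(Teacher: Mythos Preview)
Your proposal is correct and follows the paper's approach, which likewise derives all four parts as restatements of Theorems~\ref{t:full.trans.char}, \ref{t:alt.rep.ft}, \ref{t:ext.is.proj} and \ref{t:ext.power.seq}. The one place to tighten is your argument for $\Delta((1^r))=P((1^r))$. Your displayed equality $\Bbbk\mathfrak T_n e_r\otimes_{\Bbbk G_{e_r}} S((1^r))=\Ind_{G_{e_r}}(S((1^r)))$ is not the definition: by Section~2, $\Ind_{G_{e_r}}(V)=\Bbbk L_{e_r}\otimes_{\Bbbk G_{e_r}}V$, and $\Bbbk L_{e_r}$ is a proper quotient of $\Bbbk\mathfrak T_n e_r$. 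So the isomorphism $\Lambda^r(N)\cong\Ind_{G_{e_r}}(S((1^r)))$ does not literally appear in the proof of Theorem~\ref{t:ext.is.proj}; you need one more line showing that the quotient map $\Bbbk\mathfrak T_n e_r\twoheadrightarrow\Bbbk L_{e_r}$ becomes an isomorphism after tensoring with the sign module (any $f\in\mathfrak T_n e_r$ of rank below $r$ is fixed by some transposition in $G_{e_r}$, hence $f\otimes 1=0$), or else a dimension count.

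The paper's primary argument sidesteps this: it uses Corollary~\ref{c:proj.indec} to see that the only composition factors of $P((1^r))$ are $L((1^r))$ and $L((1^{r+1}))$, both with $\mu\leq(1^r)$, so by the definition of the standard module as a quotient of $P((1^r))$ one gets $\Delta((1^r))=P((1^r))$ immediately. Your direct-isomorphism route is the alternative the paper mentions parenthetically; with the fix above it works just as well.
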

\begin{proof}
Other than the claim $\Delta((1^r))=P((1^r))$, the theorem is just a restatement of Theorem~\ref{t:full.trans.char}, Theorem~\ref{t:ext.is.proj} and Theorem~\ref{t:ext.power.seq}.  Corollary~\ref{c:proj.indec}  implies that $P((1^r))$ has no composition factor $L(\lambda)$ with $\lambda\nleq (1^r)$ and hence $P((1^r))=\Delta((1^r))$ (alternatively, one can easily find a direct isomorphism between $\Ind_{G_{e_r}}(S((1^r)))$ and $\Lambda^r(N)$).
\end{proof}

\subsection{Tilting modules for $\Bbbk\mathfrak T_n$ and the Ringel dual}\label{s4}

\subsubsection{Multiplicities of $L((1^r))$ in injective modules}\label{s4.1}

Our goal is to show that the indecomposable tilting modules for $\Bbbk\mathfrak T_n$ with respect to the
quasi-hereditary structure mentioned above are precisely the injective indecomposable
modules $I(\lambda)$, where  $\lambda\neq (1)$, together with the simple projective module $L((1^n))$.
We begin by studying multiplicities of $L((1^r))$ in injective modules.
We shall write $[V:L]$ for the multiplicity of
a simple module $L$ as a composition factor of a $\Bbbk \mathfrak T_n$-module $V$.

\begin{proposition}\label{p:no.exterior.comp}
Let $1\leq r\leq n$ and let $\lambda$ be a partition of $r$. Then we have
\begin{displaymath}
[I(\lambda):L((1^r))]=
\begin{cases}
1, & \lambda= (1^r);\\
1, & \lambda= (1^{r+1}),\ 1\leq r\leq n-1;\\
0, & \text{else.}
\end{cases}
\end{displaymath}
\end{proposition}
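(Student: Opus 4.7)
I would exploit the explicit two-composition-factor structure of $P((1^r))$ provided by Theorem~\ref{t:Tn.rep} together with the standard identity
\[
[V:L(\mu)]=\dim\Hom_{\Bbbk\mathfrak T_n}(P(\mu),V),
\]
which is available here because $\Bbbk$ has characteristic zero and $\mathbb Q$ is a splitting field for all symmetric groups, so $\End_{\Bbbk\mathfrak T_n}(L(\mu))=\Bbbk$ for every $\mu$.

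First I would dispose of the case $r=n$ directly: part (iv) of Theorem~\ref{t:Tn.rep} gives $P((1^n))=L((1^n))$, so
\[
[I(\lambda):L((1^n))]=\dim\Hom_{\Bbbk\mathfrak T_n}(L((1^n)),I(\lambda)).
\]
Since the socle of the injective envelope $I(\lambda)$ is $L(\lambda)$, this dimension is $1$ precisely when $\lambda=(1^n)$ and $0$ otherwise, which matches the stated value.

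For $1\leq r\leq n-1$, I would apply the left exact contravariant functor $\Hom_{\Bbbk\mathfrak T_n}(-,I(\lambda))$ to the short exact sequence
\[
0\longrightarrow L((1^{r+1}))\longrightarrow P((1^r))\longrightarrow L((1^r))\longrightarrow 0
\]
from part (iii) of Theorem~\ref{t:Tn.rep}. Injectivity of $I(\lambda)$ makes the resulting sequence short exact, so
\[
[I(\lambda):L((1^r))]=\dim\Hom_{\Bbbk\mathfrak T_n}(L((1^r)),I(\lambda))+\dim\Hom_{\Bbbk\mathfrak T_n}(L((1^{r+1})),I(\lambda)).
\]
The socle argument applied once more shows that each $\dim\Hom_{\Bbbk\mathfrak T_n}(L((1^j)),I(\lambda))$ equals $1$ when $\lambda=(1^j)$ and $0$ otherwise, for $j\in\{r,r+1\}$, and assembling the two contributions gives the tabulated formula.

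I do not anticipate any real obstacle: the whole argument is powered by the fortunate fact, recorded in Theorem~\ref{t:Tn.rep}, that $P((1^r))$ has exactly two composition factors, $L((1^r))$ on top and $L((1^{r+1}))$ at the bottom, so that a single application of $\Hom(-,I(\lambda))$ recovers the multiplicity. The only point that really needs checking is the splitting-field hypothesis underlying the multiplicity-via-$\Hom$ identity, and this is immediate from characteristic zero and the well-known rationality of symmetric group representations.
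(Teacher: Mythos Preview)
Your proposal is correct and follows essentially the same approach as the paper: both use the splitting-field identity $[I(\lambda):L((1^r))]=\dim\Hom_{\Bbbk\mathfrak T_n}(P((1^r)),I(\lambda))$ and then exploit the fact from Theorem~\ref{t:Tn.rep} that $P((1^r))$ has exactly the composition factors $L((1^r))$ and $L((1^{r+1}))$ (or just $L((1^n))$ when $r=n$). The only cosmetic difference is that the paper compresses your exact-sequence step into a second application of the dual identity $\dim\Hom_{\Bbbk\mathfrak T_n}(P((1^r)),I(\lambda))=[P((1^r)):L(\lambda)]$ and reads off the answer directly, whereas you break $P((1^r))$ into its two simple pieces via \eqref{eq:pascal.identity} and evaluate $\dim\Hom(L((1^j)),I(\lambda))$ on each using the simple-socle property; these are the same computation.
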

\begin{proof}
As $\Bbbk$ is a splitting field for $\mathfrak T_n$, we have that
\begin{displaymath}
[I(\lambda):L((1^r))]=\dim \mathrm{Hom}(P((1^r)),I(\lambda))=[P((1^r)):L(\lambda)].
\end{displaymath}
Therefore, the claim follows directly from Theorem~\ref{t:Tn.rep}\eqref{t:Tn.rep.3},\eqref{t:Tn.rep.4}.
\end{proof}

\subsubsection{Directed injective tilting modules}\label{s4.2}

From Proposition~\ref{p:no.exterior.comp}, we easily
deduce that a vast majority of the injective indecomposable modules are tilting modules.

\begin{corollary}\label{c:some.tilt}
Let $1\leq r\leq n$ and $\lambda$ be a partition of $r$ different from $(1^r)$.
Then $T(\lambda)=I(\lambda)$.
\end{corollary}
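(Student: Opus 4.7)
The plan is to show that $I(\lambda)$ carries both a standard and a costandard filtration, hence is an indecomposable tilting module $T(\mu)$ for some $\mu$, and then to pin down $\mu=\lambda$ via a socle comparison. Since $\Bbbk\mathfrak T_n$ is quasi-hereditary, every indecomposable injective $I(\lambda)$ automatically has a costandard filtration, so the real task is to produce a standard filtration.

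The key input is that no $L((1^s))$ appears as a composition factor of $I(\lambda)$ when $\lambda\neq(1^r)$. Indeed, the argument in the proof of Proposition~\ref{p:no.exterior.comp} applies verbatim for every $s$, yielding
\[
[I(\lambda):L((1^s))]=\dim\Hom(P((1^s)),I(\lambda))=[P((1^s)):L(\lambda)],
\]
and by Theorem~\ref{t:Tn.rep}\eqref{t:Tn.rep.3},\eqref{t:Tn.rep.4} the only composition factors of $P((1^s))$ are $L((1^s))$ and, when $s<n$, $L((1^{s+1}))$. Since $\lambda$ is a partition of $r$ different from $(1^r)$, it is not a column partition, so this multiplicity vanishes for every $s$. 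By Theorem~\ref{t:Tn.rep}\eqref{t:Tn.rep.1}, every simple $L(\mu)$ with $\mu$ not a column partition is equal to $\Delta(\mu)$, so every composition factor of $I(\lambda)$ is already its own standard module, and any composition series of $I(\lambda)$ is therefore automatically a standard filtration.

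Hence $I(\lambda)$ is a tilting module, and being indecomposable as the injective envelope of a simple, it must be isomorphic to $T(\mu)$ for some $\mu$. To identify $\mu$, the embedding $\Delta(\mu)\hookrightarrow T(\mu)$ gives $\soc{\Delta(\mu)}\subseteq \soc{I(\lambda)}=L(\lambda)$. The absence of column-partition composition factors in $I(\lambda)$ rules out $\mu=(1^s)$ for any $s$, since then $\Delta(\mu)=P((1^s))$ would force $L((1^s))$ (or $L((1^n))$ if $s=n$) to appear in $I(\lambda)$. Hence $\Delta(\mu)=L(\mu)$, and the socle comparison forces $\mu=\lambda$. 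The only mildly nontrivial step is extending Proposition~\ref{p:no.exterior.comp} from $s=r$ to arbitrary $s$, but this is immediate from its proof, so no real obstruction arises.
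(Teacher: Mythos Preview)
Your proof is correct and follows essentially the same approach as the paper: use the quasi-hereditary structure to get a costandard filtration of $I(\lambda)$, invoke Proposition~\ref{p:no.exterior.comp} (for all $s$, which the paper also does) to see that every composition factor of $I(\lambda)$ is a non-column $L(\nu)=\Delta(\nu)$, and conclude that a composition series is already a standard filtration. The only cosmetic difference is that the paper identifies $T(\lambda)$ directly by observing that the socle $L(\lambda)=\Delta(\lambda)$ sits inside $I(\lambda)$ with standardly-filtered cokernel, whereas you first write $I(\lambda)\cong T(\mu)$ and then argue $\mu=\lambda$; both amount to the same thing.
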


\begin{proof}
Any injective indecomposable module for a quasi-hereditary algebra always has a filtration by costandard modules.  By Proposition~\ref{p:no.exterior.comp}, each composition factor of $I(\lambda)$ is of the form $L(\nu)$ with $\nu$ not of the form $(1^s)$.  But then $\Delta(\nu)=L(\nu)$ by Theorem~\ref{t:Tn.rep} and so a composition series for $I(\lambda)$ is a filtration by standard modules.  Thus $I(\lambda)$ is an indecomposable tilting module with socle $L(\lambda)=\Delta(\lambda)$.  We conclude that $I(\lambda)=T(\lambda)$.
\end{proof}

\subsubsection{Indecomposable tilting $\Bbbk \mathfrak T_n$-modules}\label{s4.3}

We can now describe all indecomposable tilting $\Bbbk \mathfrak T_n$-modules.

\begin{theorem}\label{t:main.tilt}
Let $\Bbbk$ be a field of characteristic $0$ and $\lambda$ a partition of $r$, where $1\leq r\leq n$.
Then we have
\begin{displaymath}
T(\lambda) = \begin{cases}L((1^n)), & \text{if}\ \lambda=(1^n);\\ I(1^{r+1}),
& \text{if}\ \lambda=(1^r),\ 1\leq r\leq n-1;\\ I(\lambda), &\text{else.}\end{cases}
\end{displaymath}
\end{theorem}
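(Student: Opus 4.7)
The plan is to dispatch the three branches of the statement separately. The third branch, where $\lambda$ is not of the form $(1^s)$, is already covered by Corollary~\ref{c:some.tilt}, so the work lies in the two remaining cases. For $\lambda=(1^n)$, the argument is immediate: by Theorem~\ref{t:Tn.rep}\eqref{t:Tn.rep.4}, $\Delta((1^n))=L((1^n))$ is one-dimensional and simple, hence coincides with its own costandard module and is trivially a tilting module. The identity inclusion $\Delta((1^n))\hookrightarrow L((1^n))$ has zero cokernel, which vacuously admits a standard filtration, and the uniqueness in the characterization of $T((1^n))$ forces $T((1^n))=L((1^n))$.

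For $\lambda=(1^r)$ with $1\leq r\leq n-1$, the strategy is to verify that $I((1^{r+1}))$ fulfils the three conditions characterizing $T((1^r))$: it is an indecomposable tilting module, it contains $\Delta((1^r))$ as a submodule, and the resulting cokernel has a standard filtration. Indecomposability is automatic since $I((1^{r+1}))$ is an injective indecomposable. To produce the required embedding, I would use Theorem~\ref{t:Tn.rep}\eqref{t:Tn.rep.2},\eqref{t:Tn.rep.3} to identify $\Delta((1^r))=P((1^r))$ and to observe that $\mathrm{soc}(P((1^r)))=L((1^{r+1}))$, and then invoke the universal property of the injective envelope together with the essentiality of the socle in $P((1^r))$: any map $P((1^r))\to I((1^{r+1}))$ extending the socle inclusion $L((1^{r+1}))\hookrightarrow P((1^r))$ must be injective, since its kernel would have zero socle.

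The crux of the argument is the analysis of the cokernel $M=I((1^{r+1}))/\Delta((1^r))$. By Proposition~\ref{p:no.exterior.comp}, the only composition factors of $I((1^{r+1}))$ of the form $L((1^s))$ are $L((1^r))$ and $L((1^{r+1}))$, each with multiplicity one, and both already occur inside $\Delta((1^r))$ by~\eqref{eq:pascal.identity}. Consequently every composition factor of $M$ has the form $L(\mu)$ with $\mu$ not of the form $(1^s)$ for any $s$, and Theorem~\ref{t:Tn.rep}\eqref{t:Tn.rep.1} identifies each such $L(\mu)$ with $\Delta(\mu)$. Hence the composition series of $M$ is automatically a standard filtration, which combined with $\Delta((1^r))$ at the bottom gives a standard filtration of $I((1^{r+1}))$. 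Since every injective indecomposable over a quasi-hereditary algebra has a costandard filtration (as invoked in the proof of Corollary~\ref{c:some.tilt}), $I((1^{r+1}))$ is a tilting module, and the uniqueness of $T((1^r))$ yields the desired identification.

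The main obstacle is the verification that $M$ has a standard filtration, and this dissolves once one combines the multiplicity restriction from Proposition~\ref{p:no.exterior.comp} with the identity $\Delta(\mu)=L(\mu)$ for $\mu$ not of the form $(1^s)$ from Theorem~\ref{t:Tn.rep}\eqref{t:Tn.rep.1}: the composition series of $M$ becomes such a filtration automatically, and no further construction is needed.
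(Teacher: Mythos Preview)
Your proposal is correct and follows essentially the same route as the paper's own proof: dispatch the non-$(1^s)$ case via Corollary~\ref{c:some.tilt}, handle $(1^n)$ by the trivial observation that $\Delta((1^n))=L((1^n))=\nabla((1^n))$, and for $(1^r)$ with $r<n$ embed $\Delta((1^r))=P((1^r))$ into $I((1^{r+1}))$ via injectivity, then use Proposition~\ref{p:no.exterior.comp} together with Theorem~\ref{t:Tn.rep}\eqref{t:Tn.rep.1} to see that the cokernel's composition series is already a standard filtration. The only cosmetic wrinkle is your phrasing ``extending the socle inclusion $L((1^{r+1}))\hookrightarrow P((1^r))$'': what you mean is extending the inclusion $L((1^{r+1}))\hookrightarrow I((1^{r+1}))$ \emph{along} the socle inclusion into $P((1^r))$, but the intent and the kernel-has-zero-socle argument are clear and correct.
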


\begin{proof}
Since $P((1^n))=L((1^n))=\Delta((1^n))=\nabla((1^n))$, we have $T((1^n))=L((1^n))$. Hence,
by Corollary~\ref{c:some.tilt}, it remains to show that $T((1^r))=I((1^{r+1}))$, for every $1\leq r\leq n-1$.

The module $I((1^{r+1}))$ has simple socle $L((1^{r+1}))$, which is a submodule
of $\Delta((1^{r}))=P((1^r))$ by \eqref{eq:pascal.identity}.  Hence, by injectivity of
$I((1^{r+1}))$, the inclusion $L((1^{r+1}))\hookrightarrow I((1^{r+1}))$
extends to a non-zero homomorphism $\varphi\colon \Delta((1^r))\to I((1^{r+1}))$,
which must be injective because $L((1^{r+1}))=\mathrm{rad}(\Delta((1^r)))$ is the unique maximal submodule of $\Delta((1^r))$ by~\eqref{eq:pascal.identity}.  Since $\Delta((1^r))=P((1^r))$ has exactly two composition factors by Theorem~\ref{t:Tn.rep}\eqref{t:Tn.rep.3}, namely
$L((1^r))$ and $L((1^{r+1}))$, we deduce from Proposition~\ref{p:no.exterior.comp}
that the cokernel $V=I((1^{r+1}))/\Delta((1^r))$
has no composition factor of the form $L((1^s))$, where  $1\leq s\leq n$.
Thus, by Theorem~\ref{t:Tn.rep}, every simple
subquotient of $V$ is a standard module and hence $V$
has a standard filtration.  As $I((1^{r+1}))$ has a costandard filtration (being injective),
we obtain that $I((1^{r+1}))$ is a tilting module. Moreover, from the
definitions we also have that $I((1^{r+1}))\cong T((1^r))$.
\end{proof}

\subsubsection{The Ringel dual}
Let $I_1$ be the ideal of $\mathfrak T_n$ consisting of the constant mappings.  We show that the Ringel dual of $\Bbbk\mathfrak T_n$ with respect to the quasi-hereditary structure we have been considering is Morita equivalent to a one-point extension of $\Bbbk \mathfrak T_n/\Bbbk I_1$.  Here we use that the Ringel dual is Morita equivalent to $\End_{\Bbbk \mathfrak T_n}(T')^{op}$ for any tilting module $T'$ that contains each $T(\lambda)$ as a direct summand.

Let $D$ be the standard duality between right and left $\Bbbk\mathfrak T_n$-modules, so $D(V)=\Hom_{\Bbbk}(V,\Bbbk)$ for a right/left $\Bbbk\mathfrak T_n$-module. Note that $D$ sends projective/injective modules to injective/projective modules and simple modules to simple modules.  Let $P'(\lambda)$ denote the right projective cover of the simple module $L'(\lambda)=D(L(\lambda))$. Then as a right module
\[\Bbbk \mathfrak T_n = \bigoplus_{\lambda\in \Lambda} P'(\lambda)^{\dim L'(\lambda)}\]
and, in particular, $P'((1))$ appears with multiplicity one in this decomposition.  Note that the constant mapping $e_1$ is a primitive idempotent with $P'((1))=e_1\Bbbk \mathfrak T_n=L'((1))$.  To ease notation, we put $e=e_1$.  Then
\begin{equation}\label{eq:right.decomp}
(1-e)\Bbbk T_n=\bigoplus_{\lambda\in \Lambda\setminus\{(1)\}}P'(\lambda)^{\dim L'(\lambda)}
\end{equation}
and if $V=L'((1^n))\oplus (1-e)\Bbbk T_n$, then
\[T'=D(V)=L((1^n))\oplus \bigoplus_{\lambda\in\Lambda\setminus\{(1)\}} I(\lambda)^{\dim L(\lambda)}\]
is a tilting module containing each indecomposable tilting module as a direct summand by Theorem~\ref{t:main.tilt}.
Thus $A=\End_{\Bbbk \mathfrak T_n}(T')^{op}\cong \End_{\Bbbk \mathfrak T_n^{op}}(V)$ is Morita equivalent to the Ringel dual of $\Bbbk \mathfrak T_n$.

Clearly, we have
\begin{align*}
A&\cong \begin{bmatrix} \End_{\Bbbk \mathfrak T_n^{op}}(L'((1^n))) & \Hom_{\Bbbk \mathfrak T_n^{op}}((1-e)\Bbbk\mathfrak T_n,L'((1^n)))\\
                         \Hom_{\Bbbk \mathfrak T_n^{op}}(L'((1^n)),(1-e)\Bbbk\mathfrak T_n) & \End_{\Bbbk\mathfrak T_n^{op}}((1-e)\Bbbk\mathfrak T_n)\end{bmatrix}\\
                         &\cong \begin{bmatrix} \Bbbk & L'((1^n))(1-e)\\ 0 & (1-e)\Bbbk \mathfrak T_n(1-e)\end{bmatrix}
\end{align*}
because $\Hom_{\Bbbk \mathfrak T_n^{op}}(L'((1^n)),(1-e)\Bbbk\mathfrak T_n)=0$ by the dual of Proposition~\ref{p:no.exterior.comp}, as $[(1-e)\Bbbk \mathfrak T_n:L'((1^n))]=1$ and the occurrence is as the simple top of $P'((1^n))$, not in the socle.

Now if $f,g\in \mathfrak T_n$, then $(1-e)f(1-e)=f-fe$ and $(f-fe)(g-ge) = fg-fge$.  Also, note that $f-fe=0$ for any constant mapping $f$.  Thus there is an isomorphism from $\Bbbk \mathfrak T_n/\Bbbk I_1$ to $(1-e)\mathfrak T_n(1-e)$ sending the coset of $f\in \mathfrak T_n\setminus I_1$ to $f-fe$.  The coset of $f$ acts on the right of $L'((1^n))(1-e)$ by multiplication by $\sgn(f)$ where we extend $\sgn$ to $\mathfrak T_n$ by sending non-permutations to $0$.   In conclusion, we have proved the following theorem.

\begin{theorem}\label{t:Ringel.dual}
The Ringel dual of $\Bbbk \mathfrak T_n$ is Morita equivalent to the one-point extension of $\Bbbk \mathfrak T_n/\Bbbk I_1$
\[\begin{bmatrix} \Bbbk & \Bbbk \\ 0&\Bbbk \mathfrak T_n/\Bbbk I_1\end{bmatrix}\]
where $I_1$ is the ideal of constant mappings and where $\Bbbk$ is made a right $\Bbbk T_n/\Bbbk I_1$-module via the extension of the sign representation of $\mathfrak S_n$ to $\mathfrak T_n$ that vanishes on $\mathfrak T_n\setminus \mathfrak S_n$ (and, in particular, on $I_1$).
\end{theorem}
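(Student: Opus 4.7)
The plan is to realise the Ringel dual concretely as the endomorphism algebra of a tilting module containing every indecomposable tilting summand, and then identify it, up to Morita equivalence, with the stated matrix algebra. I will use that the Ringel dual is Morita equivalent to $\End_{\Bbbk\mathfrak T_n}(T')^{op}$ for any tilting module $T'$ having each $T(\lambda)$ as a direct summand.

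By Theorem~\ref{t:main.tilt}, the indecomposable tilting $\Bbbk\mathfrak T_n$-modules are $L((1^n))$ together with $I(\lambda)$ for $\lambda\neq(1)$, so any $T'$ of the form $L((1^n))\oplus\bigoplus_{\lambda\neq(1)}I(\lambda)^{m_\lambda}$ with $m_\lambda\geq 1$ qualifies. I would take $m_\lambda=\dim L(\lambda)$, so that $T'$ becomes the image under the standard duality $D$ of an easily described right module. Setting $e=e_1$, the one-dimensional right module $e\Bbbk\mathfrak T_n$ is $L'((1))=D(L((1)))$; so in the decomposition of $\Bbbk\mathfrak T_n$ as a right module this summand is isolated and $(1-e)\Bbbk\mathfrak T_n\cong\bigoplus_{\lambda\neq(1)}P'(\lambda)^{\dim L'(\lambda)}$. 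Then $V:=L'((1^n))\oplus(1-e)\Bbbk\mathfrak T_n$ satisfies $D(V)=T'$, and contravariance of $D$ yields $\End(T')^{op}\cong\End_{\Bbbk\mathfrak T_n^{op}}(V)$, so it suffices to compute this last algebra.

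I would then read off a $2\times 2$ block decomposition of $\End_{\Bbbk\mathfrak T_n^{op}}(V)$ using the two summands of $V$. The diagonal blocks are straightforward: $\End(L'((1^n)))=\Bbbk$ because $L'((1^n))$ is one-dimensional, while $\End((1-e)\Bbbk\mathfrak T_n)\cong(1-e)\Bbbk\mathfrak T_n(1-e)$, and the map $f+\Bbbk I_1\mapsto f-fe$ is a well-defined algebra isomorphism from $\Bbbk\mathfrak T_n/\Bbbk I_1$ onto $(1-e)\Bbbk\mathfrak T_n(1-e)$, since every constant map $f$ satisfies $fe=f$. One off-diagonal block vanishes: the dual of Proposition~\ref{p:no.exterior.comp} shows that $L'((1^n))$ appears in $(1-e)\Bbbk\mathfrak T_n$ with multiplicity one, and only as the simple top of $P'((1^n))$, so there is no non-zero homomorphism $L'((1^n))\to(1-e)\Bbbk\mathfrak T_n$. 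The remaining block is $\Hom_{\Bbbk\mathfrak T_n^{op}}((1-e)\Bbbk\mathfrak T_n,L'((1^n)))\cong L'((1^n))(1-e)$, which I expect to be one-dimensional.

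The step that requires real attention is pinning down the bimodule structure on this last block and checking that it matches the claim in the theorem. Under the isomorphism $f+\Bbbk I_1\mapsto f-fe$, the right action of the coset of $f$ on $L'((1^n))(1-e)$ is by precomposition with $f$; since by Theorem~\ref{t:Tn.rep}\eqref{t:Tn.rep.4} the module $L'((1^n))=D(L((1^n)))$ is the extension of the sign representation of $\mathfrak S_n$ to $\mathfrak T_n$ that vanishes on all singular mappings, this action sends the coset of $f$ to multiplication by $\sgn(f)$ when $f\in\mathfrak S_n$ and to zero on $\mathfrak T_n\setminus\mathfrak S_n$. The left $\Bbbk$-action is by scalars. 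Assembling the four blocks then produces the matrix algebra in the statement of the theorem.
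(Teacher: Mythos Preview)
Your proposal is correct and follows essentially the same route as the paper: you choose the same tilting module $T'=D(V)$ with $V=L'((1^n))\oplus(1-e)\Bbbk\mathfrak T_n$, compute $\End_{\Bbbk\mathfrak T_n^{op}}(V)$ via the same $2\times2$ block decomposition, and identify the blocks exactly as the paper does (including the isomorphism $f+\Bbbk I_1\mapsto f-fe$ and the use of the dual of Proposition~\ref{p:no.exterior.comp} for the vanishing off-diagonal block). The only point you leave slightly implicit is that $L'((1^n))(1-e)$ is indeed one-dimensional, which follows immediately since $L'((1^n))$ is one-dimensional and $e$ acts as zero on it.
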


\end{document}